\documentclass[11pt, leqno]{amsart}
\usepackage{mathrsfs}
\usepackage{graphicx}
\usepackage{amsfonts,delarray,amssymb,amsmath,amsthm, a4,a4wide}
\usepackage{latexsym}
\usepackage{epsfig}
\usepackage{color}
\usepackage[T1]{fontenc}
\usepackage{hyperref}

\newtheorem{thm}{Theorem}[section]

\newtheorem{lem}[thm]{Lemma}
\newtheorem{prop}[thm]{Proposition}

\theoremstyle{remark}
\newtheorem{rem}[thm]{Remark}

\theoremstyle{definition}
\newtheorem{defn}[thm]{Definition}

\numberwithin{equation}{section}

\newcommand{\abs}[1]{\left\vert#1\right\vert}

\newcommand{\R}{\mathbb R}

\newcommand{\Cr}{\mathcal C}
\newcommand{\St}{\mathcal S}
\newcommand{\e}{\varepsilon}

\newcommand{\p}{\partial}

\newcommand{\comment}[1]{}

\makeatletter
\@namedef{subjclassname@2020}{%
  \textup{2020} Mathematics Subject Classification}
\makeatother
\begin{document} 

\title[Isolation of scalar Allen--Cahn local minimizers]{Isolation of scalar Allen--Cahn local minimizers
}
\author{Nam Q. Le}
\address{Department of Mathematics, Indiana University, 
Bloomington, IN 47405, USA}
\email{nqle@iu.edu}
\thanks{The author was supported in part by the National Science Foundation under grant DMS-2452320.}

\subjclass[2020]{35B25, 35J20, 47J15}
\keywords{Allen--Cahn functional, local minimizer, isolation, analytic Lyapunov--Schmidt reduction, monotone
dynamical systems}


\begin{abstract}
We show that scalar local minimizers of the Allen--Cahn functional with the double-well potential and homogeneous Neumann boundary conditions on a bounded
Lipschitz domain are isolated in the $L^1$-topology.
This affirmatively answers a question raised by  Kohn and Sternberg (Local minimisers and singular perturbations,
\emph{Proc. Roy. Soc. Edinburgh Sect. A} \textbf{111} (1989)).
The proof implements an 
analytic Lyapunov--Schmidt reduction inspired by the theory of monotone
dynamical systems in works of Jiang--Yu and Hirsch--Smith in an elliptic setting.
\end{abstract}
\maketitle

\section{Introduction and statement of the main results}

In this paper, we prove that scalar local minimizers of the Allen--Cahn functional with the double-well potential on a bounded
Lipschitz domain are isolated in the $L^1$-topology. This affirmatively answers a question raised by  Kohn and Sternberg \cite{KS}.

\medskip

Throughout, let $\Omega\subset\R^n$ $(n\geq 2)$ be a bounded domain with
Lipschitz boundary. 

For each $\e>0$, we consider the Allen--Cahn functional  with the double-well potential defined on scalar $L^1(\Omega)$ functions:
\begin{equation} \label{MM} F_{\e}(u)=\left\{
 \begin{alignedat}{1} \int_{\Omega}\Big[\e \abs{D u}^2 +\e^{-1}(1-u^2)^2\Big] \, dx  ~&~ \text{if} ~u\in H^1(\Omega)\cap L^4(\Omega),\\\
 \infty~&~ \text{otherwise}.
 \end{alignedat} 
  \right.
 \end{equation} 
This functional arises in the van der Waals--Cahn--Hilliard gradient theory of phase transitions \cite{AC} and its connection with minimal surfaces via Gamma-convergence. 
 Consider the area functional $E$ defined on $L^1(\Omega)$ by
\begin{equation*}
E(u)=\left\{
 \begin{alignedat}{1}
   \frac{1}{2}\int_{\Omega}|D u| ~&~ \text{if} ~u\in BV (\Omega, \{1, -1\}), \\\
\infty~&~ \text{otherwise}.
 \end{alignedat} 
  \right.
  \end{equation*}
For a function of bounded variation $u\in BV (\Omega, \{1, -1\})$ taking values $\pm 1$, $|D u|$ denotes the total variation of the vector-valued measure $Du$ (see \cite{Simon}).

\medskip

It is a classical result that $F_\e$ Gamma-converges to $(8/3)E$ in the $L^1$-topology; see \cite{KS, Modica,MM,St}. We will just use the consequence of this convergence in the statement of Theorem \ref{isolate1}.

\medskip

Casten--Holland \cite{CH} proved that any nonconstant critical point of $F_\e$ is unstable on a convex domain $\Omega$. In contrast,  
Matano \cite{Matano} established the existence of nonconstant stable critical points of $F_\e$
 in suitable nonconvex domains. 
Based on the Gamma-convergence of $F_\e$ to $(8/3)E$, Kohn--Sternberg \cite{KS} discovered a very interesting connection between isolated local minimizers of the area functional  and the existence of local minimizers of $F_\e$ (which are obviously stable). They proved the following theorem.
\begin{thm}[\cite{KS}, Theorem 2.1]
\label{isolate1}
Let $\Omega$ be a bounded domain in $\R^n$ $(n\geq 2)$ with Lipschitz boundary, and suppose that $u_0$ is an isolated $L^1$-local minimizer of $E$. Then there exists $\e_0>0$ and a family $\{u_{\e}\}_{\e<\e_0}$ such that
\begin{enumerate}
\item $u_\e$ is an $L^1$-local minimizer of $F_\e$;
\item $\lim_{\e\to 0}\|u_\e-u_0\|_{L^1(\Omega)}=0$, and $\lim_{\e\to 0} F_\e(u_\e)= 8E(u_0)/3$. 
\end{enumerate}
\end{thm}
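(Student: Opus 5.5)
The plan follows the Kohn--Sternberg strategy: minimize $F_\e$ over a small closed $L^1$-ball around $u_0$, and use the Gamma-convergence of $F_\e$ to $(8/3)E$ together with the isolation of $u_0$ to show that these constrained minimizers lie in the interior of the ball. An interior constrained minimizer is automatically an unconstrained $L^1$-local minimizer.

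First fix $\delta>0$ such that $E(v)>E(u_0)$ for every $v\in L^1(\Omega)$ with $0<\|v-u_0\|_{L^1}\le\delta$; this is possible because $u_0$ is an isolated $L^1$-local minimizer. Set $B_\delta=\{v:\|v-u_0\|_{L^1}\le\delta\}$. For each $\e$, minimize $F_\e$ over $B_\delta$.

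Existence of a minimizer follows from the direct method. A minimizing sequence is bounded in $H^1$, because $F_\e$ controls $\e\|Du\|_2^2$ and the quartic term controls $\|u\|_{L^4}$. By Rellich's theorem on the Lipschitz domain $\Omega$, a subsequence converges strongly in $L^2$, hence in $L^1$, so the limit stays in the $L^1$-closed set $B_\delta$. Lower semicontinuity of the Dirichlet term, together with Fatou's lemma applied to $(1-u^2)^2$ along an a.e. convergent subsequence, shows the limit is a minimizer. Call it $u_\e$.

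Next we identify the limit of $u_\e$ and the limit energy.
\begin{itemize}
\item \emph{Upper bound.} The recovery-sequence part of Gamma-convergence gives $w_\e\to u_0$ in $L^1$ with $F_\e(w_\e)\to \frac83E(u_0)$. For small $\e$ we have $w_\e\in B_\delta$, so $\limsup F_\e(u_\e)\le \frac83E(u_0)<\infty$.
\item \emph{Compactness.} By the Modica--Mortola compactness argument, any sequence with bounded $F_\e$ energy is precompact in $L^1$. Apply it to $\phi(u_\e)$ with $\phi(t)=\int_0^t|1-s^2|\,ds$ (so $\phi(u_\e)$ is bounded in $BV$), and use the bound $\int(1-u_\e^2)^2\le C\e$. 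Subsequences then converge in $L^1$ to some $v\in BV(\Omega,\{\pm1\})\cap B_\delta$.
\item \emph{Lower bound.} The liminf inequality gives $\frac83E(v)\le\liminf F_\e(u_\e)\le \frac83E(u_0)$.
\end{itemize}
By the choice of $\delta$, this forces $v=u_0$. Since the limit is unique, the whole family satisfies $u_\e\to u_0$ in $L^1$. The liminf and limsup bounds then coincide, giving $F_\e(u_\e)\to\frac83E(u_0)$; this is conclusion (2).

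Finally, since $\|u_\e-u_0\|_{L^1}\to0$, there is $\e_0$ with $\|u_\e-u_0\|_{L^1}<\delta/2$ for $\e<\e_0$. Any $v$ with $\|v-u_\e\|_{L^1}<\delta/2$ then lies in $B_\delta$, so $F_\e(v)\ge F_\e(u_\e)$. Thus $u_\e$ is an $L^1$-local minimizer of $F_\e$, which is conclusion (1).

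The only delicate step is the compactness and liminf part, that is, the Modica--Mortola argument on a Lipschitz domain. It is covered by the cited Gamma-convergence results, so the main point of the argument is the interplay between isolation and the constraint.
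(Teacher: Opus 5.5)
The paper does not prove this theorem itself but quotes it from Kohn--Sternberg. Your argument is correct and is essentially their original proof. You minimize $F_\varepsilon$ over the closed ball $B_\delta$. The recovery sequence, Modica--Mortola compactness, and the liminf inequality, combined with the strict isolation of $u_0$, then force $u_\varepsilon\to u_0$ in $L^1$. So the constrained minimizers eventually lie in the interior of $B_\delta$ and are genuine $L^1$-local minimizers.

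Two points you leave implicit are harmless. First, $E(u_0)<\infty$ follows from the strict inequality in the isolation hypothesis. Second, $\inf_{B_\delta}F_\varepsilon<\infty$ for every $\varepsilon$ because $C_c^\infty(\Omega)$ is dense in $L^1(\Omega)$.
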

We recall the concept of (isolated) $L^1$-local minimizers. A similar definition applies to $L^\infty$.
\begin{defn}\label{def_local} Let $\e>0$ and $u_0, u\in L^1(\Omega)$.
\begin{enumerate}
\item We call $u_0$ an isolated $L^1$-local minimizer of $E$ if there is $\delta>0$ such that
$$E(u_0)< E(v)\qquad \text{whenever}~0<\|v-u_0\|_{L^1(\Omega)}\leq\delta.$$
\item We call $u$ an $L^1$-local minimizer of $F_\e$ if there is $\delta>0$ such that 
$$F_\e(u)\leq F_{\e}(v)\qquad\text{whenever}~\|u-v\|_{L^1(\Omega)}\leq\delta.$$
  We call $u$ an isolated $L^1$-local minimizer of $F_\e$ if there is $\delta_\ast>0$ such that 
$$F_\e(u)<F_{\e}(v)\qquad\text{whenever}~0<\|u-v\|_{L^1(\Omega)}\leq\delta_\ast.$$
\end{enumerate}
\end{defn}
 Since $C_c^\infty(\Omega)$
is dense in $L^1(\Omega)$ (see \cite[Corollary 4.23]{Brezis}), any $L^1$-local minimizer $u$ of $F_\e$ has finite energy $F_\e(u)$.

\medskip

In \cite[Remark 2.3]{KS}, Kohn and Sternberg asked whether each $u_\e$ in Theorem \ref{isolate1} is isolated. This question has been open ever since. 
In this paper, we give an affirmative answer to it. 

\begin{thm}[Isolation of scalar Allen--Cahn local minimizers]\label{cor:KS}
Each $u_\e$ in Theorem \ref{isolate1} is an isolated $L^1$-local
minimizer of the Allen-Cahn functional $F_\e$.
\end{thm}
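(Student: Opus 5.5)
We argue by contradiction, combining analyticity with the order structure of the scalar problem. Fix $\e$ and let $u=u_\e$ be an $L^1$-local minimizer with radius $\delta$. First we upgrade regularity and compactness. Testing minimality against truncations $\max(-1,\min(v,1))$, which lower both terms of $F_\e$, shows that every $L^1$-local minimizer $w$ satisfies $|w|\le 1$. Hence $w$ is a weak solution of the Neumann problem
\[
-2\e^2\Delta w = 2(1-w^2)w \ \text{ in }\Omega,\qquad \partial_\nu w=0 \ \text{ on }\partial\Omega .
\]
Elliptic regularity on the Lipschitz domain gives $H^1\cap C^{0,\alpha}(\overline\Omega)$ bounds, and interior smoothness. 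Suppose $u$ is not isolated. Then there are $v_k\neq u$ with $\|v_k-u\|_{L^1}\to0$ and $F_\e(v_k)\le F_\e(u)$. Each such $v_k$ then also attains the minimum value, so $F_\e(v_k)=F_\e(u)$, and $v_k$ is a local minimizer in a smaller $L^1$-ball. By the uniform bounds, $v_k\to u$ in $H^1$ and in $C^0$. So $u$ is a non-isolated point of the set $\mathcal M$ of critical points of $F_\e$ near $u$, all having the same energy $F_\e(u)$ and all stable, in the sense that the second variation is nonnegative.

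Next we perform the analytic Lyapunov--Schmidt reduction. Let $L=-2\e^2\Delta-2+6u^2$ with Neumann condition; this is the Hessian of $F_\e$ at $u$, up to constants. Since $u$ is a minimizer, $L\ge0$. If $\ker L=0$, the implicit function theorem in $H^1$ (or in $W^{1,p}$) makes $u$ isolated among critical points, and stability then gives strict minimality nearby, which is a contradiction. So $\ker L\neq0$. Here we use the scalar structure: $L$ is a Schrödinger operator whose lowest eigenvalue, here $0$, is simple with a positive eigenfunction $\phi>0$, by Krein--Rutman or the strong maximum principle on the connected domain. Thus $\ker L=\mathrm{span}\{\phi\}$ is one-dimensional. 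Because the nonlinearity $w\mapsto (1-w^2)w$ is a polynomial, hence analytic, the reduction yields an analytic curve $s\mapsto w(s)=u+s\phi+\psi(s)$ for $|s|<s_0$, with $\psi(s)\perp\phi$, and an analytic reduced function $g(s)=\langle \text{Euler--Lagrange}(w(s)),\phi\rangle$. Every critical point near $u$ is $w(s)$ for some $s$ with $g(s)=0$. Since the $v_k$ accumulate at $u$, $g$ has zeros accumulating at $0$. By analyticity $g\equiv0$, so the entire curve $w(s)$ consists of critical points with constant energy $F_\e(w(s))=F_\e(u)$.

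The contradiction comes from monotonicity, in the spirit of Jiang--Yu and Hirsch--Smith. Since $\phi>0$ and $\psi(s)=O(s^2)$ in $C^0$ (this uses the regularity above), for small $s>0$ we get $w(s)>u$ strictly, and the family is strictly ordered in $s$. So we have a continuum of ordered Neumann solutions. Differentiating, $\dot w(s)$ lies in the kernel of the linearization at $w(s)$, and $\dot w>0$. Write $w_1=w(s)>u=w_0$ and subtract the equations:
\[
-\e^2\Delta(w_1-w_0)=\big[(1-w_1^2)w_1-(1-w_0^2)w_0\big].
\]
Test this against $\phi$ and also against $\dot w(s)$. Since $f(w)=w-w^3$ has $f''(w)=-6w$, the difference quotient does not have a definite sign in general, so a plain convexity argument is not immediate. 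This is the step we expect to be the main obstacle. The first thing we will try is the exact identity
\[
\int_\Omega\big(f'(w(s))-f'(u)\big)\,\dot w(s)\,\phi\,dx=0,
\]
obtained by pairing the two kernel equations. Expanding it to the leading order in $s$ gives $\int u\,\phi^3=0$. Continuing to higher orders along the analytic curve gives vanishing of all the moments $\int f^{(j)}(w)\,\dot w^{\,a}\phi^{b}$. Combined with $|w|\le1$, these identities should force $\dot w$ and $w$ to be constant, i.e.\ $u$ is a constant critical point with nontrivial kernel, which is impossible for $\e>0$ small relative to $\Omega$, and also incompatible with $u_\e\to u_0$ being nonconstant. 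If this route fails, the fallback is to show directly, via the strong maximum principle, that the ordered family starting at $u$ cannot stay bounded by $1$ for all $s$ while remaining critical. This yields a contradiction with $\|w(s)\|_\infty\le1$ along the maximal analytic continuation of the curve, because energy and ordering are preserved along it and the continuation cannot terminate in the interior of the region of stable critical points.
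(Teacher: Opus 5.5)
Your proposal is sound up to the point where the reduced function $g$ vanishes identically and you obtain an analytic curve $w(s)=u+s\phi+\psi(s)$ of critical points through $u$. There is one slip: the Euler--Lagrange equation is $-\e^2\Delta w=2(1-w^2)w$, so the linearization is $-\e^2\Delta-2+6u^2$, not $-2\e^2\Delta-2+6u^2$. The final step, which you flag yourself, is a genuine gap, and neither of your routes closes it.

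The identity $\int_\Omega (f'(w(s))-f'(u))\,\dot w(s)\,\phi\,dx=0$ is just $(L_{w(s)}\dot w(s),\phi)_{L^2(\Omega)}=0$ rewritten using $L_u\phi=0$ and self-adjointness. In other words, it is a multiple of $g'(s)=0$. Expanding it in $s$ only reproduces the Taylor coefficients of $g$, which you already know vanish; for instance, $\int_\Omega u\phi^3\,dx=0$ is exactly $g''(0)=0$. These moment identities carry no information beyond the existence of the curve, so they cannot force $w$ to be constant. More fundamentally, nothing is contradictory at $u$ itself: $u$ sits in the interior of an ordered arc of equal-energy critical points, and no local consideration rules that out.

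What is missing is a global, extremal choice of base point, which is how the paper argues. This needs two facts that your proposal asserts but does not prove.
\begin{enumerate}
\item Stability persists along the arc. Since $\lambda_1(u)=0$ is simple, $\lambda_2(u)>0$, and the Lipschitz bound $|\lambda_j(v)-\lambda_j(u)|\le 12\|v-u\|_{L^\infty(\Omega)}$ keeps $\lambda_2(w(s))>0$ for small $|s|$. Because $0\ne\dot w(s)\in\ker L_{w(s)}$, this forces $\lambda_1(w(s))=0$. Hence $\dot w(s)$ is a principal eigenfunction, so it has one sign, and it is positive because $(\dot w(s),\phi)_{L^2(\Omega)}=1$.
\item The set $\St$ of stable critical points is compact in $L^\infty(\Omega)$.
\end{enumerate}
With these, suppose $\St$ were infinite. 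Its set $\St'$ of accumulation points is then nonempty and compact, so you can pick $u_*\in\St'$ maximizing $M(v)=\int_\Omega v\,dx$. The stable arc $\gamma$ through $u_*$ lies in $\St'$, yet $\frac{d}{dt}M(\gamma(t))\big|_{t=0}=\int_\Omega\phi_*\,dx>0$, which contradicts maximality. Hence $\St$ is finite.

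Isolation of $u_\e$ then follows. Any $v\neq u$ with $F_\e(v)=F_\e(u)$ in a small enough $L^1$-ball around $u$ is itself an $L^1$-local minimizer, so it lies in $\St$. That is impossible once the ball is smaller than the $L^1$-distance from $u$ to the rest of the finite set $\St$.

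Your fallback continuation argument could be made rigorous along the same lines: follow the increasing arc, take its limit by compactness, and use that the local curve at the limit contains all nearby critical points to continue past it to larger mass. As written, however, it misidentifies the contradiction. A monotone family bounded by $1$ simply converges, so $\|w(s)\|_{L^\infty(\Omega)}\le 1$ is never violated. The contradiction comes from maximality, and it requires exactly the stability-along-the-arc and compactness steps above.
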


Also in \cite{KS}, Kohn and Sternberg
suggested the positivity of the second
variation (see \eqref{Queq}) and its $\e\to 0$ limit
as a possible route to their question. As it turns out, there is a precise relationship in \cite{Le15} between the $\e\to 0$ limit of the second
variation of $F_\e$ and that of the area functional. However, this seems to be far from resolving the isolation question.

\medskip

Here, we address the question of Kohn and Sternberg
at each
fixed $\e>0$ by using the structure of the critical point set. In fact, Theorem \ref{cor:KS} follows from Theorem \ref{thm:main} below.

\medskip

From now on, let $0<\e<1$ be fixed. The upper limit $1$ is chosen only for convenience. The integrand of $F_\e$ in \eqref{MM} contains the double-well potential 
\begin{equation}\label{Weq}
W(u) =(u^2-1)^2/2.\end{equation}
We will state our results and proofs using $W$ defined by \eqref{Weq} but they can carry over to more general potentials; see Remark \ref{Wrem}.

\medskip

Any  $L^1$-local minimizer $u$ of $F_\e$ is a critical point of $F_\e$ with finite energy and belongs to $H^1(\Omega)\cap L^4(\Omega)$. As such, it is a weak solution of
\begin{equation}\label{eq:pde}
 -\e^2\Delta u+2(u^3-u)\equiv  -\e^2\Delta u + W'(u)=0 \quad\text{in }\Omega,
 \qquad \partial_\nu u=0 \quad\text{on }\partial\Omega,
\end{equation}
where $\nu$ is the outward unit normal vector on $\p\Omega$. 

Since $\Omega$ is only Lipschitz,
we should understand \eqref{eq:pde} with the homogeneous Neumann boundary condition as
\begin{equation}\label{eq:weak}
 \e^2\int_\Omega D u\cdot D\psi\, dx
       +\int_\Omega W'(u)\psi\,dx=0
       \qquad\text{for all }\psi\in H^1(\Omega).
\end{equation}
Note that $H^1(\Omega)\subset L^4(\Omega)$ when $2\leq n\leq 4$ by the Sobolev embedding theorem, while $H^1(\Omega)$ need not be contained in $L^4(\Omega)$ when $n\geq 5$.
The criticality of the local minimizer $u$ implies that \eqref{eq:weak} initially holds for all $\psi \in H^1(\Omega)\cap L^4(\Omega)$. However, 
we will prove in Lemma \ref{EL_lem} that  $|u|\leq1$. Therefore, \eqref{eq:weak} holds for all
$\psi\in H^1(\Omega)$, and we can focus on $H^1(\Omega)$, bounded weak solutions of \eqref{eq:pde}. 
\medskip

For a bounded weak solution of \eqref{eq:pde}, we consider 
 the associated linearized operator
\[L_u:=-\e^2\Delta + (6u^2-2)I\equiv -\e^2\Delta  + W''(u) I,\]
 and its second variation quadratic form
\begin{equation}\label{Queq}
 Q_u(\psi)=\int_\Omega\Big[\e^2|D\psi|^2+
                          (6u^2-2)\psi^2\Big]\, dx\equiv \int_\Omega\Big[\e^2|D\psi|^2+
                          W''(u)\psi^2\Big]\, dx.
\end{equation}
We call a weak solution $u\in H^1(\Omega)$ of \eqref{eq:pde} \emph{stable} if it is bounded and satisfies the stability condition
\[Q_u(\psi)\geq 0 \qquad \text{for all }
\psi\in H^1(\Omega).\]

\noindent
Our key result establishes the finiteness and isolation of the set of stable critical points of $F_\e$.

\begin{thm}[Finiteness and isolation of stable critical points of the Allen--Cahn functional]
\label{thm:main}
Let $\Omega\subset\R^n$ $(n\geq 2)$ be a bounded domain with
Lipschitz boundary. Let $0<\e<1$ and $F_\e$ be defined by \eqref{MM}.
Then, the following conclusions hold.
\begin{enumerate}
\item The set of stable weak solutions of \eqref{eq:pde} is finite.
\item Each stable weak solution of \eqref{eq:pde} is isolated among all finite-energy
weak solutions, both in $L^\infty(\Omega)$ and in $L^1(\Omega)$ topologies.
\item Every $L^1$-local minimizer of $F_\e$ is an isolated
$L^1$-local minimizer. In particular, there are only finitely many such
local minimizers.
\end{enumerate}
\end{thm}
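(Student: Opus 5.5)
The plan is to analyze the structure of the set $\St$ of stable weak solutions of \eqref{eq:pde}, and to obtain finiteness and isolation from three ingredients: analyticity, compactness, and an order structure.

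\emph{Step 1: regularity and compactness.} By Lemma \ref{EL_lem}, every finite-energy weak solution satisfies $|u|\le 1$. We then use De Giorgi--Nash--Moser theory for the Neumann problem on Lipschitz domains, where $W'(u)$ is bounded. This gives a uniform $C^{0,\alpha}(\overline\Omega)$ bound and uniform $H^1$ bounds on all such solutions. Consequently, the set of finite-energy solutions is compact in $C^0(\overline\Omega)$, hence in $L^\infty$, $L^1$ and (after using the equation) $H^1$. Stability passes to limits in this topology, so $\St$ is compact too. On compact sets all the relevant topologies are equivalent, so for (2) it suffices to prove isolation in $L^\infty$. Statement (3) then follows from (1) and (2). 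An $L^1$-local minimizer is a stable bounded critical point, so it lies in $\St$. If it were not isolated as a local minimizer, there would be competitors $v_k\to u$ in $L^1$ with $F_\e(v_k)=F_\e(u)$. Minimizing $F_\e$ over a small $L^1$-ball around $u$, and noting that the constraint becomes inactive at these minimizers, produces other local minimizers, hence other elements of $\St$, accumulating at $u$. This contradicts (2). This last step requires some care with the constraint, and I would handle it via a truncated minimization on annuli $\{\rho/2\le\|v-u\|_{L^1}\le \rho\}$.

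\emph{Step 2: analytic Lyapunov--Schmidt reduction.} Fix $u\in\St$. The operator $L_u$, with Neumann conditions, is self-adjoint with compact resolvent on $L^2$ and satisfies $L_u\ge 0$. If $\ker L_u=0$, the implicit function theorem, applied to the analytic map $v\mapsto -\e^2\Delta v+W'(v)$ from $H^1\cap L^\infty$-type spaces into the dual, shows that $u$ is isolated. Otherwise $0$ is the principal eigenvalue of $L_u$, so it is simple, with a positive eigenfunction $\phi>0$ by Krein--Rutman / Harnack. Writing $v=u+s\phi+w$ with $w\perp\phi$, the equation reduces to an analytic scalar function $g(s)$ on $(-\delta,\delta)$. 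Solutions near $u$ correspond to the zeros of $g$ and form an analytic curve $s\mapsto u+s\phi+w(s)$. Since $g$ is analytic, either $g\equiv0$ or $0$ is an isolated zero. This already yields isolation unless we are in the degenerate case where there is a whole analytic arc of solutions.

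\emph{Step 3: excluding continua (the main obstacle).} Here the monotone-systems input of Jiang--Yu and Hirsch--Smith enters. Suppose $g\equiv 0$. Then $\gamma(s)=u+s\phi+w(s)$ is an analytic curve of solutions, and $\gamma'(0)=\phi>0$. By continuity of the kernel eigenfunctions along the curve, $\gamma'(s)\in\ker L_{\gamma(s)}$ is a positive principal eigenfunction, so the curve is strictly increasing. Strictly ordered distinct solutions then give a contradiction through an integration identity. Set $z=\gamma'(s)>0$ and $\eta=\gamma''(s)$. Differentiating $L_{\gamma(s)}z=0$ gives $L_{\gamma}\eta=-W'''(\gamma)z^2$. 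Testing this against $z$ yields $\int_\Omega W'''(\gamma)z^3\,dx=12\int_\Omega \gamma z^3\,dx=0$ for all $s$. I would combine this with $\frac{d}{ds}$ of the identity and the monotonicity of $\gamma$ along the whole maximal arc. The arc stays in the compact set of stable solutions, and because its limits are again stable it can be continued analytically. The monotone family must then stay trapped in $[-1,1]$ and converge at both ends to stable solutions, and eventually it must reach the constants $\pm1$. These constants are nondegenerate, since $L_{\pm1}=-\e^2\Delta+4>0$, which contradicts the arc accumulating at them. The case where the arc converges to some non-constant limit is handled by re-applying Step 2 at that limit, which extends the arc. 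Making this global continuation rigorous, and ruling out every alternative to reaching $\pm1$, is the part I expect to be hardest.

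With continua excluded, every element of $\St$ is isolated among finite-energy solutions, which gives (2). Finiteness (1) then follows from compactness of $\St$.
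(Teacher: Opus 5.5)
Your overall architecture matches what is needed: compactness of the solution set, isolation at nondegenerate points, and an analytic Lyapunov--Schmidt reduction giving an analytic arc $\gamma$ with $\gamma'(0)=\phi>0$ at a nonisolated degenerate point. Your reduction of (3) to (2) is also fine, and needs no annulus minimization: any $v$ with $0<\|v-u\|_{L^1(\Omega)}\le\delta/2$ and $F_\e(v)=F_\e(u)$ is itself an $L^1$-local minimizer by the triangle inequality, hence lies in $\St$. The genuine gap is Step 3, which is where the theorem is actually decided, and none of the mechanisms you list excludes the arc.

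\begin{itemize}
\item The identity $\int_\Omega\gamma z^3\,dx=0$ is just the Fredholm solvability condition for $L_\gamma\eta=-W'''(\gamma)z^2$. It holds automatically because $\eta=\gamma''$ exists, so it is a consequence of having an arc, not an obstruction to one.
\item Ordering alone gives nothing: $-1<1$ are strictly ordered stable solutions.
\item There is no reason for a monotone arc to ``reach'' $\pm1$.
\end{itemize}

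What continuation honestly yields is this: the upper limit $u^+$ of a monotone arc exists in $L^\infty(\Omega)$, is stable, and is accumulated by the arc, so Step 2 at $u^+$ continues the arc past it. Turning this into a contradiction needs a maximality argument controlling possibly infinitely many successive continuations with accumulating endpoints. That is exactly what you leave open.

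The missing idea is to pair a functional that strictly increases along every such arc with a compact set invariant under the local arcs, and then maximize. Use the mass $M(v)=\int_\Omega v\,dx$, whose derivative along the arc at its base point is $\int_\Omega\phi\,dx>0$, and the set $\St'$ of accumulation points of $\St$.

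Near its base point, every arc consists of stable solutions. Indeed, zero stays the principal eigenvalue, since $L_{\gamma(t)}\gamma'(t)=0$ while $\lambda_2(\gamma(t))\ge\lambda_2(u)-12\|\gamma(t)-u\|_{L^\infty(\Omega)}>0$. So if some $u\in\St$ were nonisolated in $\Cr$, then $u\in\St'\neq\emptyset$. Moreover, $\St'$ is compact, and for each $v\in\St'$ the short arc through $v$ lies in $\St'$, since each of its points is a limit of other arc points. At a maximizer $u_*$ of $M$ on $\St'$, the map $t\mapsto M(\gamma(t))$ would have an interior maximum at $0$ with derivative $\int_\Omega\phi_*\,dx>0$, a contradiction. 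This gives isolation, and then finiteness by compactness, with no global continuation and no reference to $\pm1$.

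A secondary point concerns Step 2. The pairing $H^1\cap L^\infty\to(H^1)^*$ does not make the reduced linearization an isomorphism: the range of $L_u$ on $H^1\cap L^\infty$ is not closed in $(H^1)^*$, because $L_uv\in(H^1)^*$ does not force $v\in L^\infty$. Recasting the equation as $v=R(v-W'(v))$ on $L^\infty(\Omega)$ fixes this. That map is a compact perturbation of the identity whose linearization has kernel $\ker L_u$, so the analytic implicit function theorem applies.
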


We now indicate the ideas in the proof of Theorem \ref{thm:main}.

\medskip
\noindent
{\bf Notation.} Denote by $\Cr$
the set of bounded critical points of $F_\e$ and by $\St\subset \Cr$ the set of stable critical points. We use $I$ for the identity map.
\subsection*{Mechanism of the proof}
Both sets $\Cr$ and $\St$ will be shown to be compact in $L^\infty(\Omega)$. We express bounded critical points of  $F_\e$ as zeros of a map $G$ (see \eqref{Geq1}),  which is a compact perturbation of the identity on $L^{\infty}(\Omega)$. 
At a stable
critical point $u$ of $F_\e$, we will analyze the Fredholm operator $G'(u)$ which is of index zero and has the same kernel as $L_u$;
either the linearized operator $L_u$ is invertible, or its kernel is
one-dimensional and generated by a positive principal eigenfunction.
In the second case, the analytic Lyapunov--Schmidt reduction provides a real analytic
function in one variable. If $u$ is nonisolated, that scalar
function vanishes identically, and we obtain an analytic curve $\gamma$ of
critical points of $F_\e$ through it. 
The key observation is that every point on this curve that is sufficiently close to $u$ is
also stable. The reason is that the tangent vector provides a zero eigenvalue to the linearized operator $L_\gamma$, while the
second eigenvalue stays positive. This produces a curve inside $\St$.
If $\St$ were infinite, its set of accumulation points would be nonempty and compact.
Maximizing the mass functional 
$v\mapsto\int_\Omega v\, dx$
 on that set gives a contradiction,
because the positive tangent to the curve makes the mass increase.
The passage from the isolation in $L^\infty$ to isolated $L^1$-minimality is then simple.

\medskip

The analytic alternative used here is inspired by the theory of monotone
dynamical systems. In particular, Hirsch and Smith
\cite[Proposition 5 and Remark 2, pp.~393--394]{HS} presented a result
attributed to Jiang and Yu \cite[Lemma 3.3]{JY}: under appropriate 
spectral radius hypotheses, a nonisolated fixed point lies on a locally unique
analytic ordered arc of fixed points. Proposition~\ref{prop:arc} en route to the proof of Theorem~\ref{thm:main} is proved
below by an elliptic implementation of this mechanism. (Note that in the erratum to \cite{HS}, the authors
withdraw the assertion that the entire equilibrium set is finite
in their Theorems 1 and 2. Our setting and finiteness results do not rely on these theorems.)
The analytic
implicit function theorem and Lyapunov--Schmidt reduction are standard;
for an introductory treatment, see Deimling
\cite[Sections 15.1 and 16.2]{Deimling} and Kielh\"ofer \cite[Section I.2]{Kh}. For the broader dynamical framework, see Dancer--Hess \cite{DH} and
 Smith \cite{Smith}.

\medskip

\subsection*{AI assistance} The main results of this paper were achieved through a series of inquiries  with OpenAI's GPT-6 Astra. The key strategies using the  analytic Lyapunov--Schmidt reduction inspired by the  works of Jiang--Yu and Hirsch--Smith were obtained by ChatGPT. 
The author checked, reworked, and simplified all arguments and rewrote the proofs of the main results. 
 The author takes full responsibility
for the correctness and content of the paper.

\medskip

The rest of the paper is organized as follows. In Section \ref{spec_sec}, we prove the boundedness of local minimizers, introduce a Neumann resolvent operator,  and analyze spectral properties of the linearized operator associated with a bounded critical point of the Allen--Cahn functional.
The analytic Lyapunov--Schmidt  reduction will be carried out in Section \ref{LS_sec}.
We prove Theorem~\ref{thm:main} in Section \ref{Finite_sec}.

\section{Bounded critical points and spectral analysis of the linearized operator}
\label{spec_sec}
This section is devoted to preparatory materials.  We prove the boundedness of local minimizers, introduce a Neumann resolvent operator,  and analyze spectral properties of the linearized operator associated with a bounded critical point of the Allen-Cahn functional.
\subsection{Local minimizers are bounded stable critical points}

Via standard truncations, we first show that $L^1$-local minimizers of $F_\e$ are bounded stable critical points.

\begin{lem}\label{EL_lem}
Fix $0<\e<1$. Then the following hold.
\begin{enumerate}
\item Every finite-energy weak solution $u$ of \eqref{eq:pde} satisfies $\|u\|_{L^{\infty}(\Omega)}\leq1$ and $u\in C^\infty(\Omega)$.
\item Every $L^1$-local minimizer of $F_\e$ satisfies \eqref{eq:weak}  and is stable.
\end{enumerate}
\end{lem}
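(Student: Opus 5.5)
Part (1) is a truncation argument. Let $u\in H^1(\Omega)\cap L^4(\Omega)$ be a finite-energy weak solution, so \eqref{eq:weak} holds for all $\psi\in H^1(\Omega)\cap L^4(\Omega)$. I would test with $\psi=(u-1)^+$, which lies in $H^1(\Omega)\cap L^4(\Omega)$ with $D\psi=Du\,\chi_{\{u>1\}}$. This gives
\[\e^2\int_\Omega|D(u-1)^+|^2\,dx+\int_\Omega W'(u)(u-1)^+\,dx=0.\]
On $\{u>1\}$ we have $W'(u)=2u(u^2-1)>0$, so both integrals vanish. Hence $(u-1)^+=0$ a.e., that is, $u\le1$. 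Testing with $(u+1)^-$ (equivalently, using the symmetry $u\mapsto -u$) gives $u\geq -1$.

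Once $|u|\le1$, the identity \eqref{eq:weak} extends by density to all $\psi\in H^1(\Omega)$. Interior smoothness then follows by bootstrapping. $u$ is a weak solution of $-\e^2\Delta u=f:=-W'(u)\in L^\infty(\Omega)$. Interior $W^{2,p}$ estimates for all $p$ give $u\in C^{1,\alpha}_{loc}$, so $f\in C^{1,\alpha}_{loc}$. Schauder estimates then give $u\in C^{3,\alpha}_{loc}$, and iterating yields $u\in C^\infty(\Omega)$. Only interior regularity is claimed, so the Lipschitz boundary plays no role here.

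For part (2), let $u$ be an $L^1$-local minimizer. As noted after Definition \ref{def_local}, $F_\e(u)<\infty$. First I would show $|u|\le1$ directly by truncation. Set $v=\max(-1,\min(u,1))$. Then $F_\e(v)\le F_\e(u)$, with strict inequality unless $|u|\le 1$ a.e.: indeed $|Dv|\le|Du|$ a.e., and $W(v)<W(u)$ on $\{|u|>1\}$. Since $\|v-u\|_{L^1}=\int(|u|-1)^+$ could exceed $\delta$, I would truncate partially instead, using $v_t=\max(-t,\min(u,t))$ with $t\geq1$. As $t\to\infty$ we have $\|v_t-u\|_{L^1}\to0$, so for large $t$ minimality gives $F_\e(u)\le F_\e(v_t)$. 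But $F_\e(v_t)<F_\e(u)$ whenever $\{|u|>t\}$ has positive measure, because $W$ is increasing in $|u|$ for $|u|>1$. Hence $|u|\le t_0$ a.e. for some finite $t_0$.

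With $u$ bounded, the first variation in directions $\psi\in H^1(\Omega)\cap L^\infty(\Omega)$ is legitimate. For such $\psi$ and small $|s|$, we have $\|u+s\psi-u\|_{L^1}\le\delta$, and $s\mapsto F_\e(u+s\psi)$ is a polynomial in $s$ with a minimum at $s=0$. This yields \eqref{eq:weak} for bounded test functions. Part (1) then gives $|u|\le1$; its argument only used bounded test functions once $u\in L^\infty$, since $(u-1)^+$ is bounded. Density in $H^1$ then extends \eqref{eq:weak} to all of $H^1(\Omega)$. The second derivative at $s=0$ is nonnegative, which gives $Q_u(\psi)\ge0$ (up to the factor $\e$) for bounded $\psi$. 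Density of $H^1\cap L^\infty$ in $H^1$ and the continuity of $Q_u$ on $H^1$, which holds because $6u^2-2$ is bounded, give stability.

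The main obstacle is the subtlety in part (2): the $L^1$-neighborhood constraint and the possible unboundedness of $u$ for $n\ge5$. I handle both with the large-$t$ truncation $v_t$ and by using bounded test directions.
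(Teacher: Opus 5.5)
Your proof is correct and follows the paper's route: the $(u-1)^+$ truncation for part (1), and first and second variations plus density of bounded functions in $H^1(\Omega)$ for part (2). The one difference is your preliminary energy-comparison truncation $v_t$, which the paper does not need: finite energy already gives $u\in L^4(\Omega)$, so every direction $\psi\in H^1(\Omega)\cap L^4(\Omega)$ is admissible ($s\mapsto F_\e(u+s\psi)$ is then a quartic polynomial), which yields \eqref{eq:weak} for such $\psi$ and lets the part (1) argument give $|u|\le 1$ directly, so the $n\geq 5$ issue you flag is not really an obstacle.
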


\begin{proof}
We prove (i) by first showing that $-1\leq u\leq 1$.  Since $u$ has finite energy, $u\in H^1(\Omega)\cap L^4(\Omega)$. 
By \cite[Lemma 7.6]{GT}, the truncated function $(u-1)_+:=\max\{u-1, 0\}\in H^1(\Omega)\cap L^4(\Omega)$, so we can use it as 
an admissible test function in \eqref{eq:weak}. Thus
\begin{equation}\label{eq:truncate}
 \e^2\int_\Omega|D(u-1)_+|^2\, dx
 +2\int_{\{u>1\}}u(u+1)(u-1)^2\, dx=0.
\end{equation}
This implies that $u\leq1$.
Applying the same argument to $-u$, which satisfies  \eqref{eq:weak},
gives $u\geq-1$. Hence,  $\|u\|_{L^{\infty}(\Omega)}\leq1$. 
An application of elliptic regularity (see \cite[Chapter 8]{GT}) shows that $u\in C^\infty(\Omega)$. Part (i) is proved.

\medskip
We now prove part (ii). Let $u$ be an  $L^1$-local minimizer of $F_\e$, so it has finite energy and $u\in H^1(\Omega)\cap L^4(\Omega)$.
Consider first $\psi\in H^1(\Omega)\cap L^4(\Omega)$. Then, the function
$t\mapsto F_\e(u+t\psi)$ is a polynomial in $t$ and has a local minimum at $t=0$. Therefore,
\[
 \left.\frac{d}{dt}F_\e(u+t\psi)\right|_{t=0} =2\e\int_\Omega D u\cdot D\psi\, dx
 +4\e^{-1}\int_\Omega(u^3-u)\psi\, dx=0,
\]
which is equivalent to \eqref{eq:weak} for all $\psi\in H^1(\Omega)\cap L^4(\Omega)$, and
\begin{equation}\label{eq:secondvariation}
 \left.\frac{d^2}{dt^2}F_\e(u+t\psi)\right|_{t=0}
 =2\e\int_\Omega|D\psi|^2\, dx
  +4\e^{-1}\int_\Omega(3u^2-1)\psi^2\, dx
 =\frac{2}{\e}Q_u(\psi)\geq0.
\end{equation}

By the same truncation argument in part (i), $u$ is bounded, so is $u^3-u$. Bounded $H^1(\Omega)$ truncations are dense in $H^1(\Omega)$,
so \eqref{eq:weak} extends to all $H^1(\Omega)$ test functions. 
The same density argument proves that $Q_u(\psi)\geq0$ for all $\psi \in H^1(\Omega)$. The lemma is proved.
\end{proof}

\subsection{The Neumann resolvent operator $R$ and compactness}
\label{sec_R}

For $g\in L^2(\Omega)$, the Lax--Milgram theorem gives a unique $v\in H^1(\Omega)$
such that
\begin{equation}\label{eq:Rweak}
 \e^2\int_\Omega D v\cdot D\psi\, dx
       +\int_\Omega v\psi\,dx=\int_\Omega g\psi\, dx
       \qquad \text{for all }\psi\in H^1(\Omega).
\end{equation}
Write $v=Rg$ where $R$ is the {\it Neumann resolvent operator}:
\begin{equation}\label{Rdefn}
 R=(-\e^2\Delta_N+I)^{-1},
\end{equation}
where the subscript $N$ indicates the Neumann boundary condition associated with $v$.

\medskip

Letting $\psi=v$ in \eqref{eq:Rweak} shows that $R:L^2(\Omega)\to H^1(\Omega)$ is bounded and
\[\|Rg\|_{H^1(\Omega)}\leq \e^{-2}\|g\|_{L^2(\Omega)}.\] Since $\Omega$ is bounded and Lipschitz, the Rellich--Kondrachov compactness theorem (see \cite[Theorem 7.26]{GT}) then shows that
\begin{equation}
\label{L2cpt}
R:L^2(\Omega)\to L^2(\Omega)\qquad \text{is compact}.\end{equation}
 Moreover, $R$ is self-adjoint on $L^2(\Omega)$: 
\begin{equation}\label{eq:Rselfadjoint}
 (g, Rh)_{L^2(\Omega)}=(Rg, h)_{L^2(\Omega)}\qquad \text{for all }g,h\in L^2(\Omega).
\end{equation}

\medskip

Similarly, consider the rescaled Laplace operator $A$ and its active domain $D(A)$:
\begin{equation}
\label{Adefn}
A=-\e^2\Delta, \quad D(A):=
\{v\in H^1(\Omega): -\e^2\Delta v\in L^2(\Omega),~ \p_\nu v=0  \text{ weakly}\}.
\end{equation}
Explicitly, $v\in D(A)$ if and only if $v\in H^1(\Omega)$ and there
exists $f\in L^2(\Omega)$ such that $Av=f$ in the following weak sense:
\[
 \e^2\int_\Omega D v\cdot D\psi\,dx=\int_\Omega f\psi\, dx \quad \text{for all }\psi\in H^1(\Omega).
\]
In the previous paragraph defining $R$, we have $Av=g-v$ weakly. It can be verified that
\begin{equation}
\label{DAeq}
D(A)= R (L^2(\Omega)).\end{equation}

{\bf Compactness of $R$ in $L^\infty(\Omega)$.}  Similar to \eqref{L2cpt}, 
we now prove the compactness of $R$  in $L^\infty(\Omega)$ using the global boundedness for weak solutions to Schr\"odinger operators
$-\e^2 \Delta -f I$
 with bounded potentials $f$ and right-hand sides and Neumann boundary condition. 
 This compactness allows us to work on the Banach space $L^\infty(\Omega)$ on which the nonlinearity $W'(u)$ is analytic and
bounded critical points of the Allen--Cahn functional $F_\e$ are zeros of the map 
\begin{equation}\label{Geq1} G(v)\equiv v-R(3v-2v^3)\equiv v-R (v-W'(v)),\end{equation} which is a compact perturbation of the identity. 

\begin{lem}[Global boundedness for  weak solutions to Schr\"odinger operators]
\label{lem:Rcompact}
Let $\Omega\subset\R^n$ be a bounded Lipschitz domain. Let
$0<\e<1$,  $f_0,f\in L^\infty(\Omega)$. Assume
$v\in H^1(\Omega)$ satisfies
\begin{equation}\label{f0feq}
 \e^2\int_\Omega D v\cdot D\psi
       +\int_\Omega v\psi
 =\int_\Omega(f_0+fv)\psi
 \qquad\text{for every }\psi\in H^1(\Omega).
\end{equation}
Then $v\in L^\infty(\Omega)$ and there exists a constant $C$, depending only on $\Omega$, $n$, and $\e$, such that  
\begin{equation}\label{interpol1}
 \|v\|_{L^\infty(\Omega)}
 \le C\Big(\|f_0\|^{\frac{n}{n+2}}_{L^\infty(\Omega)}\|v\|^{\frac{2}{n+2}}_{L^2(\Omega)}
                      + \|f\|_{L^\infty(\Omega)}^{\frac{n}{2}}  \|v\|_{L^2(\Omega)}\Big).
\end{equation}
\end{lem}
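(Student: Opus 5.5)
I will prove the estimate by De Giorgi iteration with the truncated test functions $(v-k)_+$, first obtaining a bound of the form $\|v\|_{L^\infty}\le C(k_0+\|v\|_{L^2})$ type with explicit dependence, and then optimizing via scaling/interpolation to reach the exponents in \eqref{interpol1}. Since $\Omega$ is Lipschitz, $H^1(\Omega)$ embeds into $L^{2^*}(\Omega)$ ($2^*=2n/(n-2)$ for $n\ge3$, any finite exponent for $n=2$) with a constant depending only on $\Omega,n$; this Sobolev inequality on the whole domain (not just for functions vanishing on the boundary) is what replaces the Dirichlet setting, and it is legitimate here because Neumann test functions may be arbitrary $H^1$ functions. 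By symmetry ($-v$ solves \eqref{f0feq} with $-f_0$), it suffices to bound $\sup v$.

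Step 1 (Caccioppoli-type energy inequality). For $k\ge0$, test \eqref{f0feq} with $\psi=(v-k)_+\in H^1(\Omega)$ (by \cite[Lemma 7.6]{GT}). On $A_k=\{v>k\}$ we have $v\psi\ge \psi^2$ and $|f_0+fv|\psi\le (\|f_0\|_\infty+\|f\|_\infty k)\psi+\|f\|_\infty\psi^2$. This gives
\[
\e^2\|D\psi\|_{L^2}^2+\|\psi\|_{L^2}^2\le \|f\|_{L^\infty}\|\psi\|_{L^2}^2+(\|f_0\|_{L^\infty}+k\|f\|_{L^\infty})\|\psi\|_{L^1}.
\]
The term $\|f\|_\infty\|\psi\|_2^2$ is the troublesome one; I will absorb it by Hölder and Sobolev on the small set $A_k$: $\|\psi\|_2^2\le |A_k|^{2/n}\|\psi\|_{2^*}^2\le C|A_k|^{2/n}\|\psi\|_{H^1}^2$, so once $|A_k|\le c\,\|f\|_\infty^{-n/2}$ (with $c$ depending on $\e,\Omega,n$) it is absorbed. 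By Chebyshev, $|A_{k}|\le \|v\|_2^2/k^2$, so choosing the starting level $k_0\sim \|f\|_\infty^{n/4}\|v\|_2$ guarantees this — this is where the factor $\|f\|_\infty^{n/2}\|v\|_2$ (or a similar power) will enter. Similarly the term $k\|f\|_\infty\|\psi\|_1$ is handled since $k\lesssim$ the final bound, and can be absorbed by treating $f v$ on $A_k$ as $\le \|f\|_\infty v$ and using the same smallness.

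Step 2 (iteration). With the absorbed inequality $\|\psi\|_{H^1}^2\le C\,M\|\psi\|_{L^1}$, where $M=\|f_0\|_\infty$ (plus the absorbed $f$-contributions), Sobolev and Hölder give for $h>k\ge k_0$ the standard recursion $\phi(h)\le C M^{\alpha}(h-k)^{-\beta}\phi(k)^{1+\theta}$ with $\phi(k)=|A_k|$ or $\|(v-k)_+\|_2^2$; De Giorgi's lemma (e.g. \cite[Ch.~8]{GT} style, or Stampacchia's lemma) then yields $\sup v\le k_0+d$ with $d$ explicit in $M$ and $\phi(k_0)\le\|v\|_2^2$. Tracking exponents: the $f_0$ part yields $d\le C\|f_0\|_\infty^{n/(n+2)}\|v\|_2^{2/(n+2)}$, which is dimensionally consistent with \eqref{interpol1} (the scale-invariant combination for a right-hand side in $L^\infty$ and data in $L^2$), and $k_0$ gives the $f$ term.

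The main obstacle is getting exactly the stated exponents, especially the $\|f\|_\infty^{n/2}$ factor, since the zero-order term $fv$ cannot simply be moved to the right-hand side as an $L^\infty$ datum (that would be circular in $\|v\|_\infty$). If the absorption route produces a different power, I would instead first treat $fv$ as a datum $\tilde f_0=f_0+fv$, get $\|v\|_\infty\le C(\|f_0\|_\infty+\|f\|_\infty\|v\|_\infty)^{n/(n+2)}\|v\|_2^{2/(n+2)}$, and then use Young's inequality $ab\le \tfrac12 a^{(n+2)/n}+Cb^{(n+2)/2}$ to absorb $\|v\|_\infty$: this gives $\|f\|_\infty^{n/(n+2)}\|v\|_\infty^{n/(n+2)}\|v\|_2^{2/(n+2)}\le \tfrac12\|v\|_\infty+C\|f\|_\infty^{n/2}\|v\|_2$, exactly the second term. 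This absorption requires knowing $v\in L^\infty$ a priori, which I will secure by applying the argument to truncations or by the qualitative De Giorgi bound from Step 2 (any finite bound suffices). I expect this Young-inequality route to be the cleanest, with the constant-coefficient estimate for $f\equiv0$ being the core computation.
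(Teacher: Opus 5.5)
Your scheme is the paper's: De Giorgi iteration with $(v-k)_+$, absorption of $\|f\|_{L^\infty}\|(v-k)_+\|_{L^2}^2$ on superlevel sets made small by Chebyshev, a starting level $k_0$ given as a maximum of two expressions, and the symmetry $v\mapsto -v$. The gap is that your exponent bookkeeping is wrong, and the exponents are the entire content of \eqref{interpol1}.

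For $n\ge3$ with the critical exponent $2^*=2n/(n-2)$, one has $\theta=1-2/2^*=2/n$. The $f_0$-part of the iteration then gives $\sup v\le k_0+C\|f_0\|_{L^\infty}\mu(k_0)^{2/n}\le k_0+C\|f_0\|_{L^\infty}\|v\|_{L^2}^{4/n}k_0^{-4/n}$. The optimal level is $k_0=\|f_0\|_{L^\infty}^{n/(n+4)}\|v\|_{L^2}^{4/(n+4)}$, not $\|f_0\|_{L^\infty}^{n/(n+2)}\|v\|_{L^2}^{2/(n+2)}$. Your dimensional check in fact points the other way: for $-\Delta v=g$ the scale-invariant pair is $(n/(n+4),4/(n+4))$, and \eqref{interpol1} is not scale invariant.

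Consequently the Young detour does not rescue the $f$-term. Young converts an $f_0$-exponent $a$ into the power $a/(1-a)$ of $\|f\|_{L^\infty}$, so $a=n/(n+4)$ returns $\|f\|_{L^\infty}^{n/4}$, exactly what your absorption route already gave. Both powers are fixed by $\theta$. The pair $(n/(n+2),\,n/2)$ corresponds to $\theta=1/n$, i.e.\ to the deliberately subcritical exponent $p=2n/(n-1)$ in \eqref{eq:Sobolev}. This exponent is valid on bounded Lipschitz domains for every $n\ge2$, with $p=4$ when $n=2$. It is what the paper uses, and with it the level $k_0$ in \eqref{eq:kzero} yields \eqref{interpol1} directly, with no Young step.

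If you want to keep $2^*$, you must add a conversion you did not supply. When $\|f\|_{L^\infty}\le 1/2$, testing \eqref{f0feq} with $v$ gives $\|v\|_{L^2}\le 2|\Omega|^{1/2}\|f_0\|_{L^\infty}$. This lets you dominate both $\|f_0\|_{L^\infty}^{n/(n+4)}\|v\|_{L^2}^{4/(n+4)}$ and $\|f\|_{L^\infty}^{n/4}\|v\|_{L^2}$ by $C\|f_0\|_{L^\infty}^{n/(n+2)}\|v\|_{L^2}^{2/(n+2)}$. When $\|f\|_{L^\infty}>1/2$, use $\|f\|_{L^\infty}^{n/4}\le 2^{n/4}\|f\|_{L^\infty}^{n/2}$, and split according to whether $\|v\|_{L^2}\le\|f_0\|_{L^\infty}$; otherwise the $f_0$-term is bounded by $\|v\|_{L^2}\le 2^{n/2}\|f\|_{L^\infty}^{n/2}\|v\|_{L^2}$.

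Two smaller points.
\begin{itemize}
\item The a priori bound $v\in L^\infty$ needed to absorb $\|v\|_{L^\infty}$ in the Young step cannot come from ``applying the argument to truncations'', since truncations of $v$ do not satisfy \eqref{f0feq}. It has to come from the absorption iteration itself, which already delivers a quantitative bound.
\item The term $k\|f\|_{L^\infty}\|(v-k)_+\|_{L^1}$ is not handled by ``$k\lesssim$ the final bound'', which is circular. The paper controls it by iterating only over levels in $[k_0,2k_0]$, where $F_0+Fk\le F_0+2Fk_0$. It chooses $k_0$ so that \eqref{eq:smallmeasure} forces the increment $d$ to satisfy $d\le k_0$.
\end{itemize}
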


\begin{proof} The proof is based on De Giorgi's technique; see \cite[Chapter 4]{HL}. 
Set \[F_0=\|f_0\|_{L^\infty(\Omega)},\,\, F=\|f\|_{L^\infty(\Omega)}, \,\, M=\|v\|_{L^2(\Omega)},\,\,  \mu(k)=|\{x\in\Omega:v(x)>k\}|\quad\text{for } k\geq 0.\] 
If $M=0$, then \eqref{interpol1} is obvious.
If $F_0=F=0$, then testing \eqref{f0feq} with $\psi=v$ gives $v=0$.
 Assume now that $M>0$ and $F_0+F>0$.

Since $\Omega$ is bounded and Lipschitz, by \cite[Theorem 7.26]{GT}, there exists a constant $C_\Omega$ depending only on $\Omega$ and $n$, such that the following Sobolev inequality holds:
\begin{equation}\label{eq:Sobolev}
 \|v\|^2_{L^p(\Omega)}\leq C_\Omega\big(\|D v\|^2_{L^2(\Omega)}+
                              \|v\|^2_{L^2(\Omega)}\big)
 \qquad\text{for all } v\in H^1(\Omega),\quad \text{where } p=\frac{2n}{n-1}.
\end{equation}
Set
\[
 \theta=1-2/p=1/n,\qquad
 C_0=C_\Omega/\e^2,\qquad
 C_1=2^{(p-1)/(p-2)}C_0.
\]
Testing \eqref{f0feq} with $(v-k)_+$ and discarding the nonnegative
term $k\int_\Omega (v-k)_+\,dx$ gives
\begin{equation*}
\begin{split}
 \e^2\|D(v-k)_+\|^2_{L^2(\Omega)}+\|(v-k)_+\|^2_{L^2(\Omega)}
 &\leq (F_0+ Fk)\int_{\{v>k\}}(v-k)_+\, dx + F\int_\Omega (v-k)_+^2\, dx\\
 &\leq (F_0+ Fk)\mu(k)^{1-1/p}\|(v-k)_+\|_{L^p(\Omega)} 
 \\
 &\quad+ F\mu(k)^\theta \|(v-k)_+\|^2_{L^p(\Omega)},
 \end{split}
\end{equation*}
where we use the H\"older inequality and note that 
$(v-k)_+$ vanishes outside $\{v>k\}$.

Using \eqref{eq:Sobolev} and $0<\e<1$, we obtain
\begin{equation}\label{eq:beforeabsorption}
 \|(v-k)_+\|^2_{L^p(\Omega)} 
 \le C_0(F_0+Fk)\mu(k)^{1-1/p} \|(v-k)_+\|_{L^p(\Omega)} 
       +C_0F\mu(k)^\theta \|(v-k)_+\|^2_{L^p(\Omega)} .
\end{equation}
Thus, whenever \begin{equation}
\label{absor}
C_0F\mu(k)^\theta\le1/2,\end{equation} we may absorb the last
term in \eqref{eq:beforeabsorption} and then obtain
\begin{equation}\label{eq:truncation}
 \|(v-k)_+\|_{L^p(\Omega)} 
 \le 2C_0(F_0+Fk)\mu(k)^{1-1/p}.
\end{equation}

We now choose a suitable starting level $k_0$ from which the absorption condition holds, and then iterate using \eqref{eq:truncation}. In fact, choose
\begin{equation}\label{eq:kzero}
 k_0:=\max\Big\{
        (4C_1F_0 M^{2\theta})^{1/(1+2\theta)},\,
        (8C_1F)^{1/(2\theta)}M\Big\}>0.
\end{equation}
Then
\begin{equation*}
 4C_1F_0 M^{2\theta}\le k_0^{1+2\theta},\qquad
 8C_1F M^{2\theta}\le k_0^{2\theta}.
\end{equation*}
Together with the Chebyshev inequality,
these relations give
\begin{equation}\label{eq:smallmeasure}
 \mu(k_0)\leq 
 M^2/k_0^2,
 \qquad
 2C_1F_0\mu(k_0)^\theta\le k_0/2,\qquad
 4C_1F\mu(k_0)^\theta\le1/2.
\end{equation}
Observe that the absorption condition \eqref{absor} holds for every $k\ge k_0$ because $C_1\ge C_0$, so
\[
 C_0F\mu(k_0)^\theta
 \le C_1F\mu(k_0)^\theta\leq 1/8.
\]

For $k_0\le k<\ell\le2k_0$, we have
\[F_0+Fk\le F_0+2Fk_0 \qquad\text{and }(v-k)_+\ge\ell-k\quad \text{on }\{v>\ell\}.\]
It follows from \eqref{eq:truncation} that
\begin{equation}\label{eq:iteration}
 \mu(\ell)
 \le\Big(\frac{2C_0(F_0+2Fk_0)}{\ell-k}\Big)^p
       \mu(k)^{p-1}.
\end{equation}

If $\mu(k_0)=0$, then $ \mathop{\rm ess\,sup}_\Omega v\le k_0$.
Otherwise, let
\[
 K:=2C_0(F_0+2Fk_0)>0,\quad
 0<d:=2^{(p-1)/(p-2)}K\mu(k_0)^\theta
   =2C_1(F_0+2Fk_0)\mu(k_0)^\theta\leq k_0,
\]
where we used the last two bounds in \eqref{eq:smallmeasure}.
Consequently, all the levels
\[
 k_j=k_0+d(1-2^{-j}),\qquad j=0,1,2,\ldots,
\]
belong to $[k_0,2k_0]$, where \eqref{eq:iteration} applies.

Since $k_{j+1}-k_j=d2^{-j-1}$,  by induction, \eqref{eq:iteration} gives
\begin{equation}\label{eq:decay}
 \mu(k_j)\le\mu(k_0)2^{-jp/(p-2)}.
\end{equation}
 Since $\{v>k_0+d\}\subset\{v>k_j\}$, letting $j\to\infty$ gives $\mu(k_0+d)=0$.
This implies
\[
  \mathop{\rm ess\,sup}_\Omega v\le k_0+d\le2k_0.
\]
Since $-v$ satisfies \eqref{f0feq} with $f_0$ replaced by $-f_0$ and
$f$ unchanged, the same argument gives $\mathop{\rm ess\,sup}_\Omega(-v)\le2k_0$. Therefore, 
\[\|v\|_{L^\infty(\Omega)}\leq 2k_0.\]
This and \eqref{eq:kzero} together with $\theta=1/n$ establish \eqref{interpol1}.
The lemma is proved.
\end{proof}

\begin{lem}[Compactness of the Neumann resolvent in $L^\infty(\Omega)$]
\label{Rinfcpt}
There exist positive constants $C_\ast=C_\ast(\Omega,\e)$ and $C'_\ast=C'_\ast(\Omega,\e)$
such that
\begin{equation}\label{interpol2}
 \|Rg\|_{L^\infty(\Omega)}\leq
 C_\ast \|g\|^{\frac{n}{n+2}}_{L^\infty(\Omega)}\|Rg\|^{\frac{2}{n+2}}_{L^2(\Omega)}, \,\,  \|Rg\|_{L^\infty(\Omega)}\leq C'_\ast \|g\|_{L^\infty(\Omega)}
 \quad\text{for all } g\in L^\infty(\Omega).
\end{equation}
Consequently, $R:L^\infty(\Omega)\to L^\infty(\Omega)$ is a compact, bounded linear operator.
\end{lem}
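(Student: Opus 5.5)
The plan is to apply Lemma \ref{lem:Rcompact} with $f=0$ and $f_0=g$, then derive compactness by interpolating between $L^\infty$ and $L^2$.

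\emph{Step 1: the two bounds.} Given $g\in L^\infty(\Omega)\subset L^2(\Omega)$ (recall $\Omega$ is bounded), set $v=Rg$. By \eqref{eq:Rweak}, $v$ satisfies \eqref{f0feq} with $f_0=g$ and $f=0$. Lemma \ref{lem:Rcompact} then gives directly
\[
\|Rg\|_{L^\infty(\Omega)}\le C\|g\|_{L^\infty(\Omega)}^{\frac{n}{n+2}}\|Rg\|_{L^2(\Omega)}^{\frac{2}{n+2}},
\]
which is the first inequality with $C_\ast=C$. For the second, I test \eqref{eq:Rweak} with $\psi=v$, which gives $\|Rg\|_{L^2}\le\|g\|_{L^2}\le|\Omega|^{1/2}\|g\|_{L^\infty}$. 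Inserting this into the first bound yields $\|Rg\|_{L^\infty}\le C_\ast|\Omega|^{1/(n+2)}\|g\|_{L^\infty}$. Hence $R$ is bounded on $L^\infty$, with $C'_\ast=C_\ast|\Omega|^{1/(n+2)}$.

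\emph{Step 2: compactness.} Let $(g_k)$ be a bounded sequence in $L^\infty(\Omega)$, say $\|g_k\|_{L^\infty}\le B$. Then $(g_k)$ is also bounded in $L^2(\Omega)$. By \eqref{L2cpt}, some subsequence has $(Rg_k)$ Cauchy in $L^2(\Omega)$. The key point is that $R$ is linear, so $Rg_k-Rg_j=R(g_k-g_j)$, and $\|g_k-g_j\|_{L^\infty}\le 2B$. Applying the first inequality of \eqref{interpol2} to $g_k-g_j$ gives
\[
\|Rg_k-Rg_j\|_{L^\infty}\le C_\ast(2B)^{\frac{n}{n+2}}\|Rg_k-Rg_j\|_{L^2}^{\frac{2}{n+2}}\to0.
\]
So the subsequence is Cauchy in $L^\infty(\Omega)$, and it converges there because $L^\infty$ is complete. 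Therefore $R$ is compact on $L^\infty(\Omega)$.

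The only point needing care is the interpolation inequality of Lemma \ref{lem:Rcompact}. It is what upgrades the $L^2$-compactness to $L^\infty$-compactness, and it must be applied to differences, which linearity of $R$ permits. Beyond that, the argument is routine.
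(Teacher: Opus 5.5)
Your proposal is correct and follows the paper's proof essentially step for step: apply Lemma \ref{lem:Rcompact} with $f_0=g$ and $f=0$, obtain the second bound from the first via the energy estimate and H\"older, and upgrade $L^2$-compactness to $L^\infty$-compactness by applying the interpolation inequality to the differences $R(g_k-g_j)$. The only difference is that you spell out the bound $\|Rg\|_{L^2(\Omega)}\le\|g\|_{L^2(\Omega)}$, obtained by testing with $\psi=Rg$, which the paper folds into ``H\"older's inequality.''
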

\begin{proof} If $g\in L^\infty(\Omega)$, then $v:= Rg\in H^1(\Omega)$ satisfies \eqref{f0feq} with $f_0=g$ and $f\equiv 0$. Thus, the first estimate in \eqref{interpol2} follows from \eqref{interpol1}. Then H\"older's inequality gives the second estimate.
Therefore, $R:L^\infty(\Omega)\to L^\infty(\Omega)$ is a bounded linear operator.

Suppose that $\{g_j\}$ is bounded in $L^\infty(\Omega)$. It is bounded in
$L^2(\Omega)$, so the compactness of $R$ on $L^2(\Omega)$ gives a subsequence, still denoted $\{Rg_j\}$,  which
 converges in $L^2(\Omega)$. Applying the first estimate in \eqref{interpol2} to
$g_j-g_k$, and using $\|R(g_j-g_k)\|_{L^2(\Omega)}\to0$, we find that $\{Rg_j\}$ is Cauchy in
$L^\infty(\Omega)$. This proves the compactness of $R$ on $L^\infty(\Omega)$. The lemma is proved.
\end{proof}

\subsection{Spectral estimates for the linearized operator}

For a bounded critical point $u\in H^1(\Omega)$ of $F_\e$, the associated linearized operator $L_u$ and its corresponding domain $D(L_u)$ for our analysis are given by
\begin{equation}\label{eq:Lu}
 L_u:=-\e^2 \Delta + (6u^2-2)I \equiv A+W''(u) I,\qquad W(u)=(u^2-1)^2/2,\qquad D(L_u)=D(A).
\end{equation}
 Note that the expression
$Q_u(v)$ in \eqref{Queq} is defined for every $v\in H^1(\Omega)$, while
 \[Q_u(v)=(L_u v, v)_{L^2(\Omega)}
\quad\text{for all } v\in D(A).\]

\medskip

As in Section \ref{sec_R}, we can show that $[-\e^2 \Delta + W''(u)I + 3I]^{-1}$ is a  self-adjoint, compact operator on $L^2(\Omega)$. Thus, it has a sequence of eigenvalues $\eta_i(u)$, all positive, with corresponding nontrivial eigenfunctions $\psi_i(u)\in L^2(\Omega)$. The eigenvalue equation implies that
$\psi_i(u)\in D(A)$. Thus, $L_u$ has eigenvalues $\lambda_i(u) =\frac{1}{\eta_i(u)}-3$ and  corresponding nontrivial eigenfunctions $\psi_i(u)\in D(A)$. We list the eigenvalues of $L_u$ with multiplicity as
\[
 \lambda_1(u)\leq\lambda_2(u)\leq\cdots\longrightarrow+\infty.
\]
Choose $\phi_1, \phi_2, \cdots$ to be an $L^2(\Omega)$-orthogonal sequence of eigenfunctions of $L_u$:
\[L_u \phi_j= \lambda_j(u)\phi_j.\]
The first eigenvalue has a variational characterization given by the Rayleigh quotient:
\begin{equation}\label{eq:lambda1}
 \lambda_1(u)=
 \inf_{\psi\in H^1(\Omega)\setminus\{0\}}
 \frac{Q_u(\psi)}{\|\psi\|^2_{L^2(\Omega)}}.
\end{equation}
In particular, the stability of $u$ is equivalent to $\lambda_1(u)\geq0$. 

\medskip

We will need several spectral properties of the linearized operator.
\begin{lem}\label{lem:spectral}
For a bounded critical point $u\in H^1(\Omega)$ of $F_\e$,
every eigenfunction of $L_u$ belongs to $L^\infty(\Omega)$.
The first eigenvalue $\lambda_1(u)$ is simple, and its eigenfunction can be chosen
to be strictly positive in $\Omega$. Moreover, for bounded critical points $u,v\in H^1(\Omega)$ of $F_\e$, we have
\begin{equation}\label{eq:eigencontinuous}
 |\lambda_j(u)-\lambda_j(v)|\leq
 \|W''(u)-W''(v)\|_{L^{\infty}(\Omega)}\leq12\|u-v\|_{L^{\infty}(\Omega)}
 \qquad\text{for all } j\geq 1.
\end{equation}
\end{lem}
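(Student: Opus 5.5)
We prove the three assertions separately: $L^\infty$ bounds for eigenfunctions, simplicity and positivity of the first eigenfunction, and the Lipschitz bound on eigenvalues.

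\emph{Boundedness of eigenfunctions.} Let $L_u\phi=\lambda\phi$ with $\phi\in D(A)$. In weak form this reads
\[
\e^2\int_\Omega D\phi\cdot D\psi+\int_\Omega\phi\psi=\int_\Omega (\lambda+1-W''(u))\phi\,\psi \qquad\text{for all }\psi\in H^1(\Omega).
\]
This is exactly \eqref{f0feq} with $f_0=0$ and $f=\lambda+1-W''(u)$. Since $|u|\le 1$ by Lemma~\ref{EL_lem}, the potential $f$ lies in $L^\infty(\Omega)$. Lemma~\ref{lem:Rcompact} then gives $\phi\in L^\infty(\Omega)$.

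\emph{Simplicity and positivity of $\lambda_1(u)$.} The argument is the standard Rayleigh-quotient one.
\begin{itemize}
\item If $\phi$ minimizes \eqref{eq:lambda1}, then $|\phi|\in H^1(\Omega)$ with $|D|\phi||=|D\phi|$ a.e. Hence $Q_u(|\phi|)=Q_u(\phi)$, so $|\phi|$ is also a minimizer and therefore a first eigenfunction.
\item The function $w=|\phi|\ge0$ solves $-\e^2\Delta w+(W''(u)+c)w=(\lambda_1+c)w\ge0$ in $\Omega$. Choose $c$ large, e.g.\ $c=4$, so that the zeroth-order coefficient $W''(u)+c$ is nonnegative.
\item The weak Harnack inequality for supersolutions (\cite[Theorem 8.18]{GT}, interior, with bounded coefficients) applies. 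Since $\Omega$ is connected, $w$ is either identically zero or strictly positive on compact subsets. So $|\phi|>0$ in $\Omega$.
\item By continuity ($\phi$ is smooth in the interior by elliptic regularity), $\phi$ has one sign.
\item For simplicity: if $\phi_1,\phi_2$ were two $L^2$-orthogonal first eigenfunctions, each would have a strict sign. Then $\int\phi_1\phi_2\ne0$, a contradiction.
\end{itemize}
This step needs the most care. One must check that the Harnack/strong maximum principle applies in $H^1$ with the Neumann weak formulation, but only interior arguments are required, so the Lipschitz boundary causes no difficulty.

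\emph{Eigenvalue continuity.} Use the Courant--Fischer min-max characterization
\[
\lambda_j(u)=\inf_{V\subset H^1,\ \dim V=j}\ \sup_{\psi\in V\setminus\{0\}}\frac{Q_u(\psi)}{\|\psi\|_{L^2}^2}.
\]
This follows from the compact self-adjoint resolvent structure described above. Since
\[
Q_u(\psi)-Q_v(\psi)=\int_\Omega (W''(u)-W''(v))\psi^2,
\]
the Rayleigh quotients differ by at most $\|W''(u)-W''(v)\|_{L^\infty}$ uniformly in $\psi$. Taking inf-sup yields the first inequality in \eqref{eq:eigencontinuous}. For the second, $W''(u)-W''(v)=6(u-v)(u+v)$ and $|u+v|\le2$ by Lemma~\ref{EL_lem}, giving the constant $12$.
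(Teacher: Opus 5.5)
Your proposal is correct and follows essentially the same route as the paper. Boundedness comes from Lemma~\ref{lem:Rcompact} with $f_0=0$ and $f=1+\lambda-W''(u)$. Positivity and simplicity come from $|\phi|$ being a Rayleigh-quotient minimizer, the interior Harnack inequality, continuity on the connected set $\Omega$, and an orthogonality contradiction. The Lipschitz bound comes from the min--max principle together with $|u|,|v|\leq 1$. The only cosmetic difference is that you shift the zeroth-order coefficient by $c$ and invoke the weak Harnack inequality for supersolutions, whereas the paper applies the Harnack inequality directly to $L_u-\lambda_1$.
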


\begin{proof}
If $L_u\psi=\lambda\psi$ where $\psi\in D(A)\subset H^1(\Omega)$, then
\begin{equation}\label{eq:eigenweak}
 \e^2\int_\Omega D\psi\cdot D\eta\, dx + \int_\Omega \psi\eta\, dx
 =\int_\Omega(1+ \lambda-W''(u))\psi\eta\, dx
 \qquad\text{for all }\eta\in H^1(\Omega).
\end{equation}
Since $1+\lambda-W''(u)\in L^\infty(\Omega)$,
Lemma \ref{lem:Rcompact} 
gives $\psi\in L^\infty(\Omega)$. 
Interior elliptic regularity then gives $\psi\in C^{\infty}(\Omega)$.

\medskip

The infimum in \eqref{eq:lambda1} is attained by a standard argument using the weak compactness in
$H^1(\Omega)$ and strong compactness in $L^2(\Omega)$ of a minimizing sequence, and the lower semicontinuity of $Q_u(\psi)$. Replacing
a minimizing function by its absolute value preserves its $L^2(\Omega)$ norm
and the value of $Q_u$. Thus, there is a nonnegative first
eigenfunction $\phi$. The interior Harnack inequality  (see \cite[Section 8.8]{GT}) applied to $L_u-\lambda$
implies that any nonzero
nonnegative eigenfunction is strictly positive in the whole domain $\Omega$.

\medskip

For the simplicity of $\lambda_1(u)$, let $\psi$ be any nonzero first eigenfunction. Its
absolute value is also a Rayleigh quotient minimizer and is hence strictly
positive in $\Omega$. Since $\psi$ is continuous there and
$\Omega$ is connected, $\psi$ has one sign. If the first eigenspace
had dimension at least two, one could choose a nonzero first
eigenfunction orthogonal in $L^2(\Omega)$ to $\phi:=|\psi|>0$, contradicting the
one-sign property. This proves the simplicity of $\lambda_1(u)$.

\medskip

Finally,  for bounded critical points $u,v\in H^1(\Omega)$ of $F_\e$, 
the Rayleigh quotients for $Q_u$ and $Q_v$ differ by at most
$\|W''(u)-W''(v)\|_{L^{\infty}(\Omega)}$. The min--max principle for $\lambda_j$
\begin{equation}\label{mimax}
\begin{split}
\lambda_j(u) &= \min_{\substack{E \subset H^1(\Omega) \\ \dim E = j}} \, \max_{\psi \in E \setminus \{0\}} \frac{Q_u(\psi)}{\|\psi\|_{L^2(\Omega)}^2} \\
&= \min_{\substack{\psi \in H^1(\Omega) \setminus \{0\} \\ (\psi, \phi_i)_{L^2(\Omega)} = 0, \, i = 1, \dots, j-1}} \frac{Q_u(\psi)}{\|\psi\|_{L^2(\Omega)}^2}
\end{split}
\end{equation}
 (see \cite[Chapter 6]{Brezis}, \cite[Chapter 6]{Evans} and \cite[Chapter 8]{GT}) gives the first
inequality in \eqref{eq:eigencontinuous}. The second inequality follows from
$|u|,|v|\leq1$ and
\[|W''(u)-W''(v)|=|6u^2-6v^2|\leq12|u-v|.\]
The lemma is proved.
\end{proof}

\medskip

We record some consequences of Lemma \ref{lem:spectral} with normalized first eigenfunction.
At a stable critical point $u$ of $F_\e$, if $0$ is an eigenvalue of $L_u$, then
\begin{equation}\label{eq:simplezero}
 \lambda_1(u)=0<\lambda_2(u),\quad
 \ker L_u=\text{span}\{\phi\}, \quad \phi>0\text{ in }\Omega\quad \text{with } \|\phi\|_{L^2(\Omega)}=1.
\end{equation}
The higher eigenvalue $\lambda_2(u)$ will be used in the proof of Lemma \ref{lem:arcstable} concerning stable arc.

\section{The analytic Lyapunov--Schmidt  reduction and stability}\label{LS_sec}

This section implements an 
explicit 
analytic Lyapunov--Schmidt reduction inspired by the theory of monotone
dynamical systems to study the finiteness and isolation of stable critical points of the Allen--Cahn functional.

\subsection{Critical points as zeros of an analytic perturbation of the identity map}
Let $X=L^\infty(\Omega)$ and define the real analytic map
\begin{equation}\label{eq:G}
 G:X\longrightarrow X,\qquad
 G(v)=v-R(3v-2v^3)\equiv v- R(v-W'(v)),
\end{equation}
where $R$ is defined by \eqref{Rdefn}. We recall also \eqref{Adefn} and \eqref{DAeq}.

A zero $v$ of $G$  satisfies
\[
 v\in D(A),
 \qquad Av+2(v^3-v)\equiv Av + W'(v)=0,
\]
so it is therefore a weak solution of \eqref{eq:pde}. Conversely,
Lemma~\ref{EL_lem} implies that every finite-energy weak solution $v$ of \eqref{eq:pde} is bounded, and thus $G(v)=0$. Hence, we can write
the set of finite-energy critical points of $F_\e$  and its stable subset as
\begin{equation}\label{eq:E}
 \Cr=\{v\in L^\infty(\Omega):G(v)=0\},\qquad
 \St=\{v\in \Cr:\lambda_1(v)\geq0\}.
\end{equation}
These sets are shown to be compact in $L^\infty(\Omega)$.
\begin{lem}\label{CS_cpt}
The sets $\Cr$ and $\St$ are compact in $L^\infty(\Omega)$.
\end{lem}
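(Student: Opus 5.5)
Since $\Cr$ is the zero set of $G(v)=v-R(3v-2v^3)$, we show it is bounded and closed and that $G$ being a compact perturbation of the identity yields relative compactness. Boundedness is immediate from Lemma~\ref{EL_lem}(i): every $v\in\Cr$ satisfies $\|v\|_{L^\infty(\Omega)}\le 1$.

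For relative compactness, take any sequence $\{v_j\}\subset\Cr$. Since $G(v_j)=0$, we have $v_j=R(3v_j-2v_j^3)$. The functions $g_j:=3v_j-2v_j^3$ satisfy $\|g_j\|_{L^\infty(\Omega)}\le 5$. By Lemma~\ref{Rinfcpt}, $R:L^\infty(\Omega)\to L^\infty(\Omega)$ is compact, so a subsequence of $v_j=Rg_j$ converges in $L^\infty(\Omega)$ to some $v$ with $\|v\|_{L^\infty(\Omega)}\le1$.

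For closedness, note that $G$ is continuous on $L^\infty(\Omega)$: $v\mapsto 3v-2v^3$ is locally Lipschitz on $L^\infty(\Omega)$ and $R$ is bounded there by \eqref{interpol2}. Hence $G(v)=\lim G(v_j)=0$, so $v\in\Cr$. Thus $\Cr$ is sequentially compact, hence compact, in the metric space $L^\infty(\Omega)$. We should also confirm that the limit has finite energy, so that it genuinely lies in $\Cr$ as described by \eqref{eq:E}. This holds because a zero of $G$ lies in $D(A)\subset H^1(\Omega)$ and is bounded, so $F_\e(v)<\infty$.

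For $\St$, it suffices to show that $\St$ is closed in $\Cr$. Let $v_j\in\St$ with $v_j\to v$ in $L^\infty(\Omega)$. By \eqref{eq:eigencontinuous},
\[
|\lambda_1(v_j)-\lambda_1(v)|\le 12\|v_j-v\|_{L^\infty(\Omega)}\to0.
\]
Hence $\lambda_1(v)=\lim\lambda_1(v_j)\ge0$, and $v\in\St$. A closed subset of a compact set is compact, which finishes the proof.

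No step is a real obstacle, since the two key ingredients, compactness of $R$ in $L^\infty(\Omega)$ and Lipschitz continuity of the eigenvalues, are already established.
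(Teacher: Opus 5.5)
Your proposal is correct and follows the paper's proof essentially verbatim: it uses the uniform bound $\|v\|_{L^\infty(\Omega)}\le 1$, the compactness of $R$ on $L^\infty(\Omega)$ applied to $v_j=R(3v_j-2v_j^3)$, and the continuity of $G$ to get compactness of $\Cr$. It then uses the eigenvalue continuity \eqref{eq:eigencontinuous} to show that $\St$ is closed in $\Cr$. Your extra check that the limit has finite energy is harmless; it is already built into the identification \eqref{eq:E}.
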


\begin{proof}
If $v\in \Cr$, then $\|v\|_{L^{\infty}(\Omega)}\leq1$. Thus, for any
sequence $\{v_j\}_{j=1}^\infty\subset \Cr$, the sequence $\{v_j-W'(v_j)\}_{j=1}^\infty$ is 
bounded in $L^\infty(\Omega)$. Lemma~\ref{Rinfcpt} and the identity
$v_j=R(v_j-W'(v_j))$ give a subsequence converging in $L^\infty(\Omega)$.
Its limit belongs to $\Cr$ by the continuity of $G$. This proves
the compactness of $\Cr$. The eigenvalue continuity in
\eqref{eq:eigencontinuous} implies that $\St$ is closed in $\Cr$, so it is
compact as well. The lemma is proved.
\end{proof}

\subsection{A Fredholm operator with index zero associated with a stable critical point}\label{subsec:kernel}
Fix a stable critical point $u\in \St$ of $F_\e$. Let the linear map \[G'(u): L^\infty(\Omega)\to L^\infty(\Omega)\] denote the Fr\'echet derivative of $G$
at $u$. Then
\begin{equation}\label{eq:D}
 G'(u)=I-R [I-W''(u)I].
\end{equation}
We 
reserve the derivative notation such as $D_wH$ for the partial Fr\'echet derivative. The meaning of this notation is that for each $h\in L^\infty(\Omega)$,
\[G'(u) h= h -R(h-W''(u) h).\]
To see the formula for $G'(u)$, recall that the linear operator  $R$ is bounded on $X=L^\infty(\Omega)$. For any Fr\'echet differentiable map $\mathcal F:X\to X$, write
\[
 \mathcal F(u+h)-\mathcal F(u)=\mathcal F'(u)h+r(h),
 \qquad \|r(h)\|_X/\|h\|_X\longrightarrow0.
\]

In particular,  if $ \mathcal N: X\to X$ with $\mathcal N(v)=3v-2v^3\equiv v-W'(v)$, then from
\[
 \mathcal N(u+h)-\mathcal N(u)
 =(3-6u^2)h-6uh^2-2h^3 \equiv (1-W''(u))h-6uh^2-2h^3 .
\]
we find
 $\mathcal N'(u)h=(1-W''(u))h$ and \eqref{eq:D} easily follows from $\|u\|_{L^\infty(\Omega)}\leq 1$ and 
\[
 \begin{split}
 \|G(u+h)-G(u)-(h-R(h-W''(u)h))\|_{L^\infty(\Omega)}&= \|R(6uh^2 + 2h^3)\|_{L^\infty(\Omega)}\\
 &\leq \|R\|
 \Big(6\|h\|^2_{L^\infty(\Omega)} +2\|h\|^3_{L^\infty(\Omega)}\Big).
 \end{split}
\]

\medskip

Since $(1-W''(u))I$ is bounded and $R$ is compact, $R[(1-W''(u))I]:X\to X$ is compact.
The Fredholm alternative for identity minus a compact operator
\cite[Theorem 6.6, p.~160]{Brezis} therefore implies that $G'(u)$ has finite-dimensional kernel $\ker G'(u)$, 
closed range $\text{Ran } G'(u)$ with finite codimension, and
\[
 \dim \ker G'(u)=\dim(X/ \text{Ran } G'(u)).
\]
In other words, 
 $G'(u)$ is Fredholm
of index zero; see
\cite[Section 6.4, p.~168]{Brezis}.

\medskip

We now identify the two kernels of $G'(u)$ and $L_u$, keeping their domains explicit.
\begin{lem} \label{2kers}
With $G'(u)$ defined by \eqref{eq:D}, $L_u$ by \eqref{eq:Lu}, and $D(A)$ by \eqref{Adefn}, we have
\begin{equation}\label{eq:kernel} 
 \ker\!\bigl(G'(u):L^\infty(\Omega)\to L^\infty(\Omega)\bigr)
 =\ker\!\bigl(L_u:D(A)\to L^2(\Omega)\bigr) \subset D(A)\cap L^\infty(\Omega).
\end{equation}
\end{lem}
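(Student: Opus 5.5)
We will prove the two inclusions separately. Both rest on one reformulation: for any bounded $h$, the identity $G'(u)h=0$ says exactly $h=R\big((1-W''(u))h\big)$. By the definition \eqref{eq:Rweak} of $R$, this is equivalent to $h\in H^1(\Omega)$ together with
\[
 \e^2\int_\Omega Dh\cdot D\psi\,dx+\int_\Omega h\psi\,dx=\int_\Omega (1-W''(u))h\psi\,dx\qquad\text{for all }\psi\in H^1(\Omega).
\]
After cancelling $\int_\Omega h\psi\,dx$, this is the weak form of $Ah=-W''(u)h$, that is, $L_uh=0$ with $h\in D(A)$.

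First take $h\in L^\infty(\Omega)$ with $G'(u)h=0$. Since $u$ is bounded, $(1-W''(u))h\in L^\infty(\Omega)\subset L^2(\Omega)$. Then $h=R\big((1-W''(u))h\big)\in R(L^2(\Omega))=D(A)$ by \eqref{DAeq}. The weak identity above gives $Ah=-W''(u)h$ in the sense of \eqref{Adefn}, so $L_uh=0$. Hence $\ker G'(u)\subset\ker L_u$, and these elements lie in $D(A)\cap L^\infty(\Omega)$ automatically.

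Conversely, take $h\in D(A)$ with $L_uh=0$, so $Ah=-W''(u)h$ weakly. Adding $\int_\Omega h\psi\,dx$ to both sides yields \eqref{eq:Rweak} with $g=(1-W''(u))h\in L^2(\Omega)$. By uniqueness in Lax--Milgram, $h=R\big((1-W''(u))h\big)$.

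The only real point is to show that such an $h$ is bounded, so that it belongs to the domain $X=L^\infty(\Omega)$ of $G'(u)$. For this we note that $h$ is an eigenfunction of $L_u$ with eigenvalue $0$, and invoke Lemma~\ref{lem:spectral}. Equivalently, we apply Lemma~\ref{lem:Rcompact} with $f_0=0$ and $f=1-W''(u)\in L^\infty(\Omega)$, which is exactly the case $\lambda=0$ of \eqref{eq:eigenweak}. This gives $h\in L^\infty(\Omega)$, and the displayed identity then reads $G'(u)h=0$ in $X$. This proves the reverse inclusion, and with it both the equality and the containment in $D(A)\cap L^\infty(\Omega)$.

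I expect no serious obstacle here. The care needed is only in keeping the domains straight: the boundedness step uses Lemma~\ref{lem:Rcompact}, and the passage from $G'(u)h=0$ to $h\in D(A)$ uses \eqref{DAeq} together with the uniqueness of the Lax--Milgram solution.
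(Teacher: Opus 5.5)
Your proof is correct and follows essentially the same route as the paper. Both directions go through the identity $h=R[(1-W''(u))h]$, using $R(L^2(\Omega))=D(A)$ for the forward inclusion and Lemma~\ref{lem:spectral} (equivalently Lemma~\ref{lem:Rcompact} with $\lambda=0$) for boundedness in the reverse one; the only difference is that you write out the paper's ``applying $A+I$'' and ``applying $R$'' via the weak formulation and Lax--Milgram uniqueness.
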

\begin{proof}
 If $h\in X=L^\infty(\Omega)$ and $G'(u)h=0$, then
\[h=R[(1-W''(u))h].\] Clearly,
$(1-W''(u))h\in L^2(\Omega)$. The identity
$R(L^2(\Omega))=D(A)$ from \eqref{DAeq} 
gives $h\in D(A)$. Applying $A+I$ to the above equation, we obtain
\[
 (A+I)h=(1-W''(u))h,
 \qquad L_u h=(A+W''(u))h=0.
\]
Conversely, suppose $h\in D(A)=D(L_u)$ and $L_uh=0$. Then,
Lemma~\ref{lem:spectral} gives $h\in L^\infty(\Omega)$.
 Moreover, \[(A+I)h=(1-W''(u))h,\] so applying
$R$ gives \[h=R[(1-W''(u))h],\] and hence $G'(u)h=0$ in $X$.  The lemma is proved.
\end{proof}

We record an isolation property of nondegenerate stable critical points.
\begin{lem} [Isolation of nondegenerate stable critical point]
\label{isolem}
Let $u$ be a stable critical point of $F_\e$. If $\lambda_1(u)>0$, then $u$ is isolated, with respect to
the $L^\infty(\Omega)$ topology, in the set of bounded critical points of $F_\e$. 
\end{lem}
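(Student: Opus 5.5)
The plan is to apply the inverse function theorem to the map $G$ of \eqref{eq:G} at $u$ on the Banach space $X=L^\infty(\Omega)$. Since $\lambda_1(u)>0$ and $\lambda_1(u)\le\lambda_j(u)$ for all $j$, zero is not an eigenvalue of $L_u$. Hence $\ker(L_u:D(A)\to L^2(\Omega))=\{0\}$, and Lemma~\ref{2kers} then gives $\ker G'(u)=\{0\}$ in $X$.

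Next we use the Fredholm structure established above. By \eqref{eq:D}, $G'(u)=I-R[(1-W''(u))I]$ is the identity minus a compact operator on $X$, because Lemma~\ref{Rinfcpt} shows $R$ is compact on $L^\infty(\Omega)$ and multiplication by $1-W''(u)$ is bounded. So $G'(u)$ is Fredholm of index zero, and trivial kernel forces surjectivity. The open mapping theorem then makes $G'(u)$ a linear isomorphism of $X$ with bounded inverse.

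It remains to check that $G$ is $C^1$ near $u$ in $X$. The nonlinearity $v\mapsto 3v-2v^3$ is a polynomial map on the Banach algebra $L^\infty(\Omega)$, hence real analytic, and $R$ is bounded linear. Concretely, the computation preceding Lemma~\ref{2kers} gives $G'(v)h=h-R((1-W''(v))h)$, and
\[
\|G'(v)-G'(w)\|\le\|R\|\,\|W''(v)-W''(w)\|_{L^\infty(\Omega)}\le C\|v-w\|_{L^\infty(\Omega)}
\]
on bounded sets. The inverse function theorem therefore yields a neighborhood $U$ of $u$ in $X$ on which $G$ is injective. Since $G(u)=0$, no other $v\in U$ satisfies $G(v)=0$. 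By \eqref{eq:E}, every bounded critical point of $F_\e$ is a zero of $G$, so $u$ is isolated in $\Cr$ in the $L^\infty$ topology.

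Alternatively, one can avoid the full inverse function theorem. Write $G(v)=G'(u)(v-u)+r(v-u)$ with $\|r(h)\|_X\le\|R\|(6\|h\|^2_X+2\|h\|^3_X)$, as shown before Lemma~\ref{2kers}. If $G(v)=0$ and $h=v-u$, then $\|h\|_X\le\|G'(u)^{-1}\|\,\|r(h)\|_X=O(\|h\|_X^2)$, which forces $h=0$ once $\|h\|_X$ is small. There is no real obstacle here. The only point needing care is that the Fredholm alternative is applied in $L^\infty(\Omega)$ rather than $L^2(\Omega)$, and that is exactly what the compactness in Lemma~\ref{Rinfcpt} and the kernel identification in Lemma~\ref{2kers} provide.
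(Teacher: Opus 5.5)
Your proposal is correct and follows the paper's proof essentially step for step. You use the trivial kernel of $L_u$, Lemma~\ref{2kers}, the Fredholm alternative for $G'(u)=I-R[(1-W''(u))I]$ on $L^\infty(\Omega)$, the bounded inverse theorem, and then the inverse function theorem to get injectivity of $G$ near its zero $u$. Your alternative quadratic-remainder argument is also valid and avoids having to check that $G$ is $C^1$: any zero $u+h$ of $G$ satisfies $\|h\|_{L^\infty(\Omega)}\le \|G'(u)^{-1}\|\,\|R\|\,(6\|h\|_{L^\infty(\Omega)}^2+2\|h\|_{L^\infty(\Omega)}^3)$, which forces $h=0$ when $\|h\|_{L^\infty(\Omega)}$ is small.
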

\begin{proof}
If $\lambda_1(u)>0$, then $\ker L_u=\{0\}$. Now, Lemma \ref{2kers} and the Fredholm
alternative show that $G'(u)$ is surjective, hence bijective. Its inverse is
bounded by the bounded inverse theorem
\cite[Corollary 2.7, p.~35]{Brezis}. The inverse function
theorem  on Banach spaces \cite[Section 15.1]{Deimling} 
now tells us that $G$ is one-to-one in a
neighborhood of $u$ (with respect to
the $L^\infty(\Omega)$ topology). Since $G(u)=0$, $u$ is the only zero there, so it is isolated as asserted.
\end{proof}

\subsection{Analytic Lyapunov--Schmidt  reduction for nonisolated stable critical point}
We now consider the case of degenerate stable critical points $u$ of $F_\e$ where $\lambda_1(u)=0$.
This applies to the case of nonisolated stable critical points.  We carry out an analytic Lyapunov--Schmidt reduction (see Deimling \cite[Section 16.2]{Deimling} and Kielh\"ofer \cite[Section I.2]{Kh} for the Lyapunov--Schmidt procedure. Its analytic regularity
comes from the analytic implicit function theorem). 
In particular, using the self-adjoint elliptic linearized operator $L_u$, we implement the local analytic mechanism underlying \cite[Proposition 5 and Remark 2]{HS}, attributed there to \cite{JY}. Note that the simplicity and positivity needed in the reduction
come from $L_u$ and stability.

\medskip

Assume $u\in\St$ with $\lambda_1(u)=0$. Let $\phi$ be
the positive normalized eigenfunction in \eqref{eq:simplezero}. We emphasize that $\phi$ depends on $u$. Then \[\ker L_u=\text{span}\{\phi\} \quad\text{and}\quad (A+I)\phi=(1-W''(u))\phi.\]

Define a projection $P: X\longrightarrow X$ in the domain by
\begin{equation}\label{eq:P}
 Ph=(h, \phi)_{L^2(\Omega)}\phi,
 \qquad X_1=\ker P.
\end{equation}
It is bounded on $X$, because $|\Omega|<\infty$ and
$\phi\in L^\infty(\Omega)$. 

For the range, 
introduce the bounded functional on $X$:
\begin{equation}\label{eq:ell}
 \ell(h)=\big((1-W''(u))\phi, h\big)_{L^2(\Omega)}\quad \text{for all }h\in X.
\end{equation}
Since $R((1-W''(u))\phi)=\phi$ and $R$ is self-adjoint on $L^2(\Omega)$,
\begin{equation}
 \label{eq:ellD}
 \begin{split}
 \ell(G'(u)h)
 &=\big((1-W''(u))\phi, h\big)_{L^2(\Omega)}-\big((1-W''(u))\phi, R((1-W''(u))h)\big)_{L^2(\Omega)} \\
 &=\big((1-W''(u))\phi, h\big)_{L^2(\Omega)}-\big(R((1-W''(u))\phi), (1-W''(u))h\big)_{L^2(\Omega)}\\
 &=0.
 \end{split}
\end{equation}
Furthermore, testing $(A+I)\phi=(1-W''(u))\phi$ with $\phi$ using \eqref{eq:eigenweak} with $\lambda=0$ gives
\begin{equation}\label{eq:ellphi}
 \ell(\phi)=\int_\Omega (1-W''(u))\phi^2\, dx
 =\e^2\int_\Omega| D\phi|^2\, dx+\int_\Omega\phi^2\,dx\geq 1.
\end{equation}
Thus $\ell$ is nonzero. Since $\ker G'(u)=\ker L_u=\text{span}\{\phi\}$ (by Lemma \ref{2kers}) and $G'(u)$
has index zero, both $\text{Ran }G'(u)$ and $\ker\ell$ have codimension one.
Equation~\eqref{eq:ellD} therefore implies
\begin{equation}\label{eq:range}
 \text{Ran }G'(u)=\ker\ell=:Y_1.
\end{equation}
Define the codomain projection $\Pi: X\longrightarrow X$ by
\begin{equation}\label{eq:Pi}
 \Pi h=[\ell(h)/\ell(\phi)]\phi.
\end{equation}
Then $I-\Pi$ projects onto $Y_1$. 

The two decompositions in the Lyapunov--Schmidt reduction are
\[
 X=\text{span}\{\phi\}\oplus X_1
 \quad\text{in the domain},\qquad
 X=\text{span}\{\phi\}\oplus Y_1
 \quad\text{in the codomain}.
\]
Observe that
\begin{equation}\label{eq:Diso}
 G'(u)|_{X_1}:X_1\longrightarrow Y_1
 \quad\text{is a bounded isomorphism}.
\end{equation}
Indeed, the injectivity follows from \eqref{eq:kernel}. To see the surjectivity, if
$y=G'(u)x\in Y_1$, then \[x-Px\in X_1\quad\text{and}\quad G'(u)(x-Px)=y.\] In the last equality, we used $\ker G'(u)=\text{span}\{\phi\}$.

The closed subspaces  $X_1=\ker P$ and $Y_1=\ker\ell$ of
the Banach space $X$ are also Banach spaces. The bounded inverse theorem
\cite[Corollary 2.7]{Brezis} therefore applies to this restriction.

\medskip
 We now show that if a stable critical point of $F_\e$ is nonisolated, then there is an analytic curve  of
bounded critical points of $F_\e$ through it. The analytic Lyapunov--Schmidt reduction turns this statement into a real analytic equation.
Recall \eqref{eq:E}, and that $\Cr$
is the set of bounded critical points of $F_\e$, while $\St\subset \Cr$ is the set of stable critical points.  
\begin{prop}[The local analytic alternative]\label{prop:arc}
Suppose $u\in \St$ is nonisolated in $\Cr$ with respect to
$L^\infty(\Omega)$. Then $\lambda_1(u)=0$, and there is an injective
real analytic curve
\[
 \gamma:(-a,a)\longrightarrow \Cr,
 \qquad \gamma(0)=u,\qquad\gamma'(0)=\phi,
\]
where $\phi>0$ is the  first eigenfunction of $L_u$ with normalization $\|\phi\|_{L^2(\Omega)}=1$, with 
\begin{equation}\label{eq:parameter}
 (\gamma(t)-u, \phi)_{L^2(\Omega)}=t,
 \qquad (\gamma'(t), \phi)_{L^2(\Omega)}=1\qquad\text{for all }|t|<a.
\end{equation}
Moreover, every critical point of $F_\e$ sufficiently close to $u$ lies on this curve. 
\end{prop}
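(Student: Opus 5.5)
By Lemma~\ref{isolem}, if $\lambda_1(u)>0$ then $u$ is isolated in $\Cr$. So a nonisolated $u\in\St$ must have $\lambda_1(u)=0$, and the setup of \eqref{eq:simplezero}--\eqref{eq:Diso} is available. The plan is a Lyapunov--Schmidt reduction to a single real analytic function, followed by the real-analytic dichotomy for that function.

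\textbf{Step 1: the reduced equation.} Write $v=u+t\phi+w$ with $t\in\R$ and $w\in X_1=\ker P$; then $t=(v-u,\phi)_{L^2(\Omega)}$. Since $\Pi$ and $I-\Pi$ are complementary projections, $G(v)=0$ is equivalent to the pair
\[
(I-\Pi)G(u+t\phi+w)=0,\qquad \ell\big(G(u+t\phi+w)\big)=0.
\]
Set $H(t,w)=(I-\Pi)G(u+t\phi+w)$. This is a real analytic map from $\R\times X_1$ to $Y_1$, because $G$ is a polynomial map (a bounded linear operator composed with a cubic) on $X=L^\infty(\Omega)$. We have $H(0,0)=0$. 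By \eqref{eq:range}, $(I-\Pi)G'(u)=G'(u)$ on $X$, so $D_wH(0,0)=G'(u)|_{X_1}$, which is an isomorphism onto $Y_1$ by \eqref{eq:Diso}.

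The analytic implicit function theorem then gives $a_0>0$, $\rho>0$ and a real analytic map $w:(-a_0,a_0)\to X_1$ with $w(0)=0$, such that for $|t|<a_0$ and $\|w\|<\rho$, $H(t,w)=0$ holds if and only if $w=w(t)$. Differentiating $H(t,w(t))=0$ at $t=0$ gives $G'(u)(\phi+w'(0))=0$. Since $G'(u)\phi=0$, this yields $G'(u)w'(0)=0$, and injectivity on $X_1$ forces $w'(0)=0$.

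The reduced function is $g(t)=\ell\big(G(u+t\phi+w(t))\big)$, which is real analytic on $(-a_0,a_0)$. For $v$ with $\|v-u\|_{L^\infty}$ small, the coordinates satisfy $|t|<a_0$ and $\|w\|<\rho$, because $P$ is bounded. Hence such a $v$ lies in $\Cr$ if and only if $v=u+t\phi+w(t)$ and $g(t)=0$.

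\textbf{Step 2: the analytic dichotomy.} This is where nonisolation enters. Since $g$ is real analytic, either $g\equiv0$ near $0$, or $0$ is an isolated zero of $g$. In the second case the local description from Step 1 makes $u$ isolated in $\Cr$, which contradicts the hypothesis. Therefore $g\equiv0$ on some $(-a,a)$.

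Define $\gamma(t)=u+t\phi+w(t)$. Then $\gamma$ is real analytic, takes values in $\Cr$, and satisfies $\gamma(0)=u$ and $\gamma'(0)=\phi+w'(0)=\phi$. Because $w(t)\in\ker P$ and $\|\phi\|_{L^2(\Omega)}=1$, we get $(\gamma(t)-u,\phi)_{L^2(\Omega)}=t$. Differentiating gives $(\gamma'(t),\phi)_{L^2(\Omega)}=1$, which is \eqref{eq:parameter}. Injectivity of $\gamma$ follows immediately from \eqref{eq:parameter}. The final assertion, that every critical point near $u$ lies on $\gamma$, is the local uniqueness statement from Step 1.

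\textbf{Main obstacle.} The argument is essentially mechanical once \eqref{eq:Diso} is in hand. The care needed is in three places: verifying that $H$ is analytic as a map into the closed subspace $Y_1$; using the correct codomain projection, namely $\Pi$ built from $\ell$ rather than from $P$, so that $D_wH(0,0)$ really is the isomorphism of \eqref{eq:Diso}; and ensuring that the $L^\infty$ neighbourhood of $u$ lies inside the region where the implicit function theorem gives uniqueness, which is the step that turns the local alternative into a genuine equivalence.
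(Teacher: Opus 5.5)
Your proposal is correct and follows the paper's proof essentially step for step. It uses the same splitting with $H(t,w)=(I-\Pi)G(u+t\phi+w)$, the analytic implicit function theorem giving $w'(0)=0$, the reduced scalar function $\ell(G(u+t\phi+w(t)))$, and the identity theorem to force that function to vanish identically. The only difference is cosmetic: you phrase the last step as the dichotomy ``isolated zero or identically zero,'' whereas the paper uses a sequence of nonzero parameters $t_j\to 0$ with $h(t_j)=0$, and the two are equivalent.
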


\begin{proof}
The nonisolation of $u$ and Lemma \ref{isolem} give
$\lambda_1(u)=0$. Thus, we can use
\eqref{eq:P}--\eqref{eq:Diso}. Define the real analytic map $H: \R\times X_1\longrightarrow Y_1$:
\[
 (t,w)\in\R\times X_1\mapsto H(t,w)=(I-\Pi)G(u+t\phi+w)\in Y_1.
\]
Because $u$ is a critical point of $F_\e$,
$G(u)=0$, and therefore
\[
 H(0,0)=(I-\Pi)G(u)=0.
\]
For $\eta\in X_1$, the chain rule gives
\[
 D_wH(0,0)\eta=(I-\Pi)G'(u)\eta=G'(u)\eta,
\]
since $G'(u)\eta\in Y_1$ and $I-\Pi$ is the identity map on $Y_1$.
Thus, by \eqref{eq:Diso}, \begin{center}$D_wH(0,0)=G'(u)|_{X_1}$ is a bounded isomorphism from $X_1$
onto $Y_1$.\end{center}
By the analytic implicit function theorem
\cite[Theorem 15.3, Section 15.1]{Deimling},
we can find an analytic
function  in a neighborhood of $0\in\R$ \[w: (-a, a)\longrightarrow X_1\]
for which
\begin{equation}\label{eq:rangeequation}
 w(0)=0,\qquad (I-\Pi)G(u+t\phi+w(t))=H(t, w(t))=0.
\end{equation}
Differentiating at $0$ yields
\[
 0=(I-\Pi)G'(u)(\phi+w'(0))=G'(u)w'(0),
\]
because $G'(u)\phi=0$ and $G'(u)$ maps into $Y_1$. Since $w'(0)\in X_1$,
\eqref{eq:Diso} implies $w'(0)=0$.

\medskip
Define the curve $\gamma$ and the real analytic
scalar function $h$ by
\begin{equation}\label{eq:reduced}
 \gamma(t)=u+t\phi+w(t),\qquad
 h(t)=\ell(G(\gamma(t))).
\end{equation}
The second equation in \eqref{eq:rangeequation} and \eqref{eq:Pi} mean that $G(\gamma(t))= \Pi G(\gamma(t))$ lies in
$\text{span}\{\phi\}$. Thus, 
\[G(\gamma(t)) = \alpha(t) \phi,\]
and by applying $\ell$ on both sides, we find $h(t) =\alpha(t) \ell(\phi)$, so $\alpha(t)=h(t)/\ell(\phi)$. It follows that
\begin{equation}\label{eq:scalar}
 G(\gamma(t))=[h(t)/\ell(\phi)]\phi.
\end{equation}
Thus, $\gamma(t)$ is a bounded critical point of $F_\e$ exactly when $h(t)$ vanishes.

\medskip

Now, nonisolation and the identity theorem for a
real analytic function of one variable force $h$ to vanish identically. To see this, 
every $v$ sufficiently near $u$ has a unique representation
\[
 v=u+t\phi+w,
 \qquad t=(v-u, \phi)_{L^2(\Omega)},\qquad w\in X_1.
\]
If $G(v)=0$, then $H(t,w)=(I-\Pi)G(v)=0$, so the local uniqueness in
the implicit function theorem implies $w=w(t)$. Hence, every 
critical point of $F_\e$ that is sufficiently close to $u$ is of the form $\gamma(t)$. If $t=0$, this gives $v=u$.
Moreover, distinct nearby critical points of $F_\e$ have distinct parameters.

\medskip

Since $u$ is nonisolated, we can choose distinct 
$v_j\in \Cr\setminus\{u\}$ tending to $u$. Their corresponding parameters satisfy
$t_j\ne0$, $t_j\to0$, and $h(t_j)=0$. The identity theorem for a
real analytic function of one variable gives $h\equiv0$ near $0$.
Indeed, if its convergent Taylor series at $0$ had a first nonzero
coefficient $a_m$ where $m\geq 1$, then $h(t)=t^m(a_m+o(1))$, which is nonzero for
all sufficiently small $t\ne0$, a contradiction. Hence, all Taylor
coefficients of $h$ vanish, and the analyticity gives the asserted conclusion.
\medskip

After reducing $a$ if necessary, we have $h(t)=0$ for all $|t|<a$, and \eqref{eq:scalar} proves
$\gamma(t)\in \Cr$ for all $|t|<a$. Also $\gamma'(0)=\phi$.
Since $w(t)\in X_1$, the normalization of $\phi$ gives
\eqref{eq:parameter}, which proves the injectivity of $\gamma$ as well. The proof of the proposition is complete.
\end{proof}

Next, we  show that the analytic curve $\gamma$ in Proposition~\ref{prop:arc} stays inside the set of stable critical points $\St$. 
This stability along the curve will be crucial for the variational argument in establishing the finiteness of $\St$ where we need to find another analytic curve in $\St$ that passes through a given nonisolated point on $\gamma$.

\begin{lem}[Stable arc]
\label{lem:arcstable}
After shortening the interval in Proposition~\ref{prop:arc}, every
$\gamma(t)$ is stable. Moreover, $\gamma'(t)$ is a positive
principal eigenfunction of $L_{\gamma(t)}$.
\end{lem}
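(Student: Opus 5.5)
Since $\gamma(t)\in\Cr$ for all $|t|<a$, we have $G(\gamma(t))=0$, that is, $\gamma(t)=R(\gamma(t)-W'(\gamma(t)))$. We differentiate this identity in $t$ inside $X=L^\infty(\Omega)$; this is legitimate because $\gamma$ is real analytic into $X$ and $G$ is analytic. The result is $G'(\gamma(t))\gamma'(t)=0$, and Lemma~\ref{2kers} applied at the critical point $\gamma(t)$ then gives
\[
\gamma'(t)\in D(A)\cap L^\infty(\Omega),\qquad L_{\gamma(t)}\gamma'(t)=0 .
\]
The normalization \eqref{eq:parameter} gives $(\gamma'(t),\phi)_{L^2(\Omega)}=1$, so $\gamma'(t)\neq0$. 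Hence $0$ is an eigenvalue of $L_{\gamma(t)}$ for every $|t|<a$. (Lemma~\ref{2kers} is stated at a stable point, but its proof only uses that $\gamma(t)$ is a bounded critical point together with Lemma~\ref{lem:spectral}.)

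Next we locate the index of this zero eigenvalue using the spectral continuity \eqref{eq:eigencontinuous}. At $u$ we have $\lambda_1(u)=0<\lambda_2(u)$ by \eqref{eq:simplezero}. The map $t\mapsto\gamma(t)$ is continuous into $L^\infty(\Omega)$, so after shrinking $a$ we may assume $12\|\gamma(t)-u\|_{L^\infty(\Omega)}<\lambda_2(u)/2$. Then
\[
\lambda_2(\gamma(t))>\lambda_2(u)/2>0,\qquad |\lambda_1(\gamma(t))|<\lambda_2(u)/2 .
\]
Since $0$ is an eigenvalue of $L_{\gamma(t)}$ and every eigenvalue other than $\lambda_1(\gamma(t))$ is at least $\lambda_2(\gamma(t))>0$, we conclude $\lambda_1(\gamma(t))=0$. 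Therefore $\gamma(t)$ is stable, and by the simplicity in Lemma~\ref{lem:spectral}, $\ker L_{\gamma(t)}=\mathrm{span}\{\gamma'(t)\}$. Thus $\gamma'(t)$ is a principal eigenfunction of $L_{\gamma(t)}$.

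It remains to show that $\gamma'(t)$ is positive. By Lemma~\ref{lem:spectral}, the principal eigenspace is spanned by a strictly positive function $\phi_t$, so $\gamma'(t)=c(t)\phi_t$ for some constant $c(t)\neq0$. Since $\phi>0$ and $\phi_t>0$ in $\Omega$,
\[
1=(\gamma'(t),\phi)_{L^2(\Omega)}=c(t)\int_\Omega\phi_t\phi\,dx ,
\]
and the integral is positive. Hence $c(t)>0$ and $\gamma'(t)>0$ in $\Omega$.

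The argument is essentially routine. The only delicate point is rigor in the differentiation step: one must ensure that $t\mapsto\gamma(t)$ is differentiable in $L^\infty(\Omega)$, which holds because $w$ comes from the analytic implicit function theorem, and that the kernel identification of Lemma~\ref{2kers} is applied at $\gamma(t)$ rather than at $u$. The normalization $(\gamma'(t),\phi)_{L^2(\Omega)}=1$ does double duty, excluding both $\gamma'(t)=0$ and a negative sign.
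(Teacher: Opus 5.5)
Your proposal is correct and follows the paper's proof: $\gamma'(t)$ is a nonzero kernel element, the gap $\lambda_2(\gamma(t))>\lambda_2(u)/2>0$ persists by \eqref{eq:eigencontinuous} and forces $\lambda_1(\gamma(t))=0$, and the sign comes from $(\gamma'(t),\phi)_{L^2(\Omega)}=1$. The only difference is how you get $L_{\gamma(t)}\gamma'(t)=0$: you differentiate $G(\gamma(t))=0$ in $L^\infty(\Omega)$ and apply Lemma~\ref{2kers} at $\gamma(t)$, correctly noting that its proof does not use stability, whereas the paper upgrades $\gamma$ to an $H^1(\Omega)$-valued analytic curve and differentiates the weak formulation directly.
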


\begin{proof} 
Each point $\gamma(t)$ on the curve is a weak solution of \eqref{eq:pde} in the sense of \eqref{eq:weak}. 
The analyticity of $\gamma$ and $\gamma'(0)=\phi$ show that $\gamma'(t)$ is bounded. 
We have
\[\gamma(t) =R(\gamma(t)-W'(\gamma(t))).\]
Since \(R:L^2(\Omega)\to H^1(\Omega)\) is bounded, this identity shows that \(\gamma\) is also analytic as an \(H^1(\Omega)\)-valued curve. Consequently, we can differentiate in $t$ the equation
\[
 -\e^2\Delta \gamma(t)+W'(\gamma(t))=0 \quad\text{in }\Omega,
 \qquad \partial_\nu \gamma(t)=0 \quad\text{on }\partial\Omega,
\]
using the weak formulation \eqref{eq:weak} with test functions, and
find
\[
 -\e^2\Delta \gamma'(t)+W''(\gamma(t))\gamma'(t)=0 \quad\text{in }\Omega,
 \qquad \partial_\nu \gamma'(t)=0 \quad\text{on }\partial\Omega,
\]
also in the sense of \eqref{eq:weak}.
This gives
\begin{equation}\label{Lieq}
 L_{\gamma(t)}\gamma'(t)=0.
\end{equation}

At $t=0$ we have $\lambda_1(u)=0<\lambda_2(u)$. By
\eqref{eq:eigencontinuous}, for all sufficiently small $|t|$,
\begin{equation}\label{eq:secondgap}
 \lambda_2(\gamma(t))>\lambda_2(u)/2>0.
\end{equation}
Since \eqref{eq:parameter} implies $\gamma'(t)\ne0$, equation~\eqref{Lieq} shows that each $L_{\gamma(t)}$ has a zero eigenvalue at each small $t$. If
$\lambda_1(\gamma(t))<0$, then the zero eigenvalue would have index
at least $2$, contradicting \eqref{eq:secondgap}. Hence
$
 \lambda_1(\gamma(t))=0$, which shows that
 $ \gamma(t)\in \St$ is stable.
 
The zero eigenspace of each $L_{\gamma(t)}$ is one-dimensional. Its nonzero elements
have one sign.  By \eqref{eq:parameter},
\[(\gamma'(t), \phi)_{L^2(\Omega)}=1.\] By \eqref{eq:simplezero}, $\phi>0$ in $\Omega$, so $\gamma'(t)>0$ in $\Omega$. The lemma is proved.
\end{proof}

\section{Finiteness and isolation}\label{Finite_sec}
In this section, we prove Theorem~\ref{thm:main} which implies Theorem \ref{cor:KS}.

Observe the rate of change at $0$ of the mass functional on each $\gamma(t)$ equals $\int_\Omega\phi\, dx$ and is thus positive. We will use this fact to show  that the set of stable critical points of $F_\e$ is finite.

\begin{prop}\label{Finite_prop} 
Let $\Omega\subset\R^n$ $(n\geq 2)$ be a bounded domain with
Lipschitz boundary. Let $0<\e<1$ and $F_\e$ be defined by \eqref{MM}.
Let $\St$ be the set of stable critical points of $F_\e$. Then
\begin{enumerate}
\item The set $\St$ 
 is finite.
 \item Every member of
$\St$ is isolated in the set $\Cr$ of critical points of $F_\e$  in each of the $L^\infty(\Omega)$ and $L^1(\Omega)$ topologies.
\end{enumerate}
\end{prop}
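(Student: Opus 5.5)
We prove (i) first, by contradiction, using the mass functional as the introduction suggests. Suppose $\St$ is infinite. Since $\St$ is compact in $L^\infty(\Omega)$ (Lemma \ref{CS_cpt}), its set $\St'$ of accumulation points is nonempty. It is also closed, hence compact in $L^\infty(\Omega)$. The functional $M(v)=\int_\Omega v\,dx$ is continuous on $L^\infty(\Omega)$, so it attains its maximum on $\St'$ at some $u^\ast\in\St'\subset\St$.

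Because $u^\ast$ is an accumulation point of $\St\subset\Cr$, it is nonisolated in $\Cr$. Proposition \ref{prop:arc} then gives an analytic curve $\gamma$ through $u^\ast$, and every critical point near $u^\ast$ lies on this curve. By Lemma \ref{lem:arcstable}, after shortening the interval, $\gamma(t)\in\St$ for all $|t|<a$.

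The key step is to show that points $\gamma(t)$ with small $t>0$ lie in $\St'$, not merely in $\St$. Each $\gamma(t)$ with $t\neq0$ small is itself a stable critical point that is nonisolated in $\Cr$, since the points $\gamma(s)$ with $s\to t$ accumulate at it and $\gamma$ is injective. They are therefore accumulation points of $\St$, because the nearby points $\gamma(s)$ lie in $\St$ and are distinct from $\gamma(t)$. Hence $\gamma(t)\in\St'$ for $0<t<a$. By Lemma \ref{lem:arcstable}, $\gamma'(t)>0$ in $\Omega$, so
\[
\frac{d}{dt}M(\gamma(t))=\int_\Omega\gamma'(t)\,dx>0 .
\]
Thus $M(\gamma(t))>M(u^\ast)$ for small $t>0$. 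This contradicts the maximality of $u^\ast$ on $\St'$, and (i) follows. The main subtlety is this membership $\gamma(t)\in\St'$. It is where Lemma \ref{lem:arcstable} is used: it keeps the nearby curve points stable, so they are limits of stable points rather than merely of critical points.

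For the $L^\infty$ part of (ii), take $u\in\St$ and suppose it is nonisolated in $\Cr$. Proposition \ref{prop:arc} and Lemma \ref{lem:arcstable} produce infinitely many distinct stable points $\gamma(t)$, which contradicts (i). So $u$ is isolated in $\Cr$ in the $L^\infty(\Omega)$ topology.

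For the $L^1$ part, suppose $v_j\in\Cr\setminus\{u\}$ with $v_j\to u$ in $L^1(\Omega)$. By compactness of $\Cr$ in $L^\infty(\Omega)$, a subsequence converges in $L^\infty(\Omega)$ to some $v\in\Cr$. Since $L^\infty$ convergence on the bounded domain $\Omega$ implies $L^1$ convergence, the two limits agree: $v=u$. Then $v_j\to u$ in $L^\infty(\Omega)$ with $v_j\neq u$, contradicting the $L^\infty$ isolation just proved. Hence $u$ is isolated in $\Cr$ in the $L^1(\Omega)$ topology as well.
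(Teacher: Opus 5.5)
Your proposal is correct and follows essentially the same route as the paper. You maximize the mass functional $M$ over the compact set $\St'$ of accumulation points, use Proposition~\ref{prop:arc} and Lemma~\ref{lem:arcstable} to obtain a stable arc lying in $\St'$, and contradict maximality using the positivity of the tangent; the $L^\infty$ and $L^1$ isolation is then deduced just as in the paper. The only cosmetic difference is that you use $\gamma'(t)>0$ for small $t>0$ to get $M(\gamma(t))>M(u^\ast)$, whereas the paper observes that $t\mapsto M(\gamma(t))$ would have an interior local maximum at $0$ despite having derivative $\int_\Omega\phi_\ast\,dx>0$ there.
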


\begin{proof}
Suppose first that the compact set $\St$ is infinite.
Consider its set of accumulation points
\[
 \St'=\{u\in \St:\ u\text{ is a limit of distinct points of }\St\}.
 \]
 By the sequential
compactness implied by Lemma \ref{CS_cpt}, $\St'$
is nonempty. Since  $\St$  is compact, its set of accumulation points $\St'$ is also compact.

\medskip

Clearly, the following total mass functional is continuous on $L^\infty(\Omega)$:
\begin{equation*}
 M(v)=\int_\Omega v\,dx.
\end{equation*}
Choose $u_*\in \St'$ maximizing $M$ on $\St'$. Then $u_*$ is
stable and nonisolated in $\Cr$. (Were $u_*$ to be chosen as a maximizer of $M$ over $\St$, it would not be guaranteed to be nonisolated.) 
Let $\phi_*$
be the positive first eigenfunction of $L_{u_*}$, normalized by
$\|\phi_*\|_{L^2(\Omega)}=1$. 
Then, the local analytic alternative in Proposition~\ref{prop:arc} and the stability arc
Lemma~\ref{lem:arcstable} provide an injective analytic curve
\[
 \gamma:(-a,a)\longrightarrow \St,
 \qquad\gamma(0)=u_*,\qquad\gamma'(0)=\phi_\ast>0.
\]
Every point of this curve belongs to $\St'$. Indeed, for any
$t_0\in(-a,a)$, choose distinct $t_j\to t_0$ with $t_j\ne t_0$.
The points $\gamma(t_j)$ are distinct members of $\St$ converging
in $L^\infty(\Omega)$ to $\gamma(t_0)$.

The maximality of $u_*$ 
implies that 
the differentiable function $t\mapsto M(\gamma(t))$ has a local
maximum at the interior point $0$, so its derivative there equals zero. However, this contradicts 
\begin{equation*}
 \left.\frac{d}{dt}M(\gamma(t))\right|_{t=0}
 =\int_\Omega\phi_\ast\, dx>0.
\end{equation*}
Thus $\St$ is finite, and part (i) is proved.

\medskip

We now prove part (ii). For the $L^\infty(\Omega)$ topology, 
if any $u\in \St$ were nonisolated in $\Cr$ in the $L^\infty(\Omega)$ topology, the same
local analytic alternative and stability arc lemma would produce infinitely many
points of $\St$. This contradicts the finiteness just proved.

For the $L^1(\Omega)$ topology, we observe that 
if the sequence
$\{v_j\}\subset \Cr$ converges in $L^1(\Omega)$ to  $u\in \Cr$, then by Lemma \ref{CS_cpt}, every subsequence has a further subsequence converging in
$L^\infty(\Omega)$. Its limit must be $u$,
by the uniqueness of the $L^1(\Omega)$ limit. From this, we find  
that the original sequence $\{v_j\}$ converges in $L^\infty(\Omega)$ as well. From the isolation in the $L^\infty(\Omega)$ topology, it follows that each $u\in \St$ is isolated in $\Cr$
in the $L^1(\Omega)$ topology.  The proposition is proved.
\end{proof}

We are now ready to complete the proof of Theorem~\ref{thm:main}.
\begin{proof}[Proof of Theorem~\ref{thm:main}] Parts (i) and (ii) of Theorem~\ref{thm:main} were proved in Proposition \ref{Finite_prop}. It remains to prove part (iii). 

\medskip

Let $u$ be an 
$L^1$-local minimizer of $F_\e$, so it has finite energy. By Lemma~\ref{EL_lem}, $u\in \St$.
Choose $\delta>0$ as in Definition~\ref{def_local}. Since $\St$ is
finite by Proposition \ref{Finite_prop}, the number
\[
 m=\min\big\{\|w-u\|_{L^1(\Omega)}:w\in \St,\ w\ne u\big\}
\]
is positive, with the convention $m=+\infty$ if this set is empty.
Choose \[\delta_\ast=\tfrac12\min\{\delta,m\}>0.\]

We show that $u$ is an isolated
$L^1$-local minimizer.
Suppose by contradiction that there exists a function $v\in L^1(\Omega)$ satisfying
\begin{equation}\label{equalFe}
 0<\|v-u\|_{L^1(\Omega)}\leq \delta_\ast,
 \qquad F_\e(v)=F_\e(u).
\end{equation}
 Choose
\[0<\delta_1<\delta-\|v-u\|_{L^1(\Omega)}.\] If $\|z-v\|_{L^1(\Omega)}<\delta_1$,
the triangle inequality gives $\|z-u\|_{L^1(\Omega)}<\delta$, and hence
\[
 F_\e(z)\geq F_\e(u)=F_\e(v).
\]
Thus, $v$ is itself an $L^1$-local minimizer of $F_\e$. Lemma~\ref{EL_lem} then
implies $v\in \St$. Therefore,  \[\|v-u\|_{L^1(\Omega)}\geq m>\delta_\ast,\] contradicting 
\eqref{equalFe}.  Hence, $u$ must be an isolated
$L^1$-local minimizer. This proves part (iii) and completes the proof of Theorem~\ref{thm:main}.
\end{proof}
\begin{rem} 
\label{Wrem}
Several remarks are in order.
\begin{enumerate}
\item Our proof of the finiteness and isolation of the set of scalar stable critical points of the Allen--Cahn functional with the double-well potential $W(u)=(u^2-1)^2/2$ and integrand $\e |Du|^2 + \e^{-1} (1-u^2)^2$ can be naturally extended 
to many other analytic potentials provided the set of critical points is uniformly bounded. For example, this is the case of potentials $ (u^{2m}-K)^{2k}$ where $K>0$ and $m$ and $k$ are positive integers. 

\item The identity theorem for real analytic functions of one variable that was used in the proof of  Proposition \ref{prop:arc} (in particular, from an accumulating sequence of zeros to identical vanishing)  has no counterpart for arbitrary smooth functions or for functions of several variables. Thus, one might wonder what happens to the conclusions of Theorem \ref{thm:main} when 
the double-well potential $W(u)=(u^2-1)^2/2$ is replaced by an arbitrary smooth function excluding flat intervals on its graph.  

One can recognize that both the finiteness and isolation fail for the case of $C^{\infty}$ nonnegative and quadratic growth potential
\[\tilde W(u)=u^4 e^{-1/u^2} \sin^2 (1/u)\quad\text{if } u\neq 0,\quad \tilde W(0)=0,\]
with no flat intervals on its graph.
In this case, the constants \[u_0\equiv 0, \quad u_k=\frac{1}{k\pi}~(k=1, 2,\cdots)\]
	are all global minimizers and stable critical points of the Allen-Cahn functional with potential $\tilde W$ and  \(u_k\to u_0\) uniformly.
\item Our arguments crucially use the simplicity and nonnegativity of the principal eigenvalue together with the one-sign property of the principal  eigenfunction of the linearized operator at a stable critical point of the scalar Allen--Cahn functional. 
 They do not automatically carry over to the vector-valued or volume-constrained Allen--Cahn stable critical points mentioned in \cite[Section 3]{KS}, and taken up in other works; see  \cite{ASt, MStZ, St2, StZ} for examples most close to our Allen--Cahn functional. 
\end{enumerate}
\end{rem}
\medskip

\noindent
{\bf Acknowledgements.} The author would like to thank Peter Sternberg for his interest and helpful comments.


\begin{thebibliography}{9999}
\bibitem{ASt}
Adimurthi, A. and Sternberg, P.
Local minimizers in 3d of vector Allen-Cahn with a quadruple junction.
\emph{Calc. Var. Partial Differential Equations} \textbf{65} (2026), no. 3, Paper No. 84, 25 pp.

\bibitem{AC}Allen, S. M. and Cahn, J. W. A microscopic theory for antiphase boundary motion and its application to antiphase domain coarsening.
{\it Acta Metall.} {\bf 27} (1979), 1085--1095.
\bibitem{Brezis}
Brezis, H.
\emph{Functional analysis, Sobolev spaces and partial differential equations}.
Universitext, Springer, New York, 2011.


\bibitem{CH}
Casten, R. G. and Holland, C. J.
Instability results for reaction diffusion equations with Neumann boundary
conditions.
\emph{J. Differential Equations} \textbf{27} (1978), no. 2, 266--273.

\bibitem{DH}
Dancer, E. N. and Hess, P.
Stability of fixed points for order-preserving discrete-time dynamical systems.
\emph{J. Reine Angew. Math.} \textbf{419} (1991), 125--139.

\bibitem{Deimling}
Deimling, K.
\emph{Nonlinear functional analysis}.
Springer-Verlag, Berlin, 1985.

\bibitem{Evans} Evans, L. C.
\emph{Partial differential equations}.
Second edition.
Grad. Stud. Math., 19
American Mathematical Society, Providence, RI, 2010

\bibitem{GT}
Gilbarg, D. and Trudinger, N. S.
\emph{Elliptic partial differential equations of second order}. Reprint of the 1998 edition.
Classics Math.
Springer-Verlag, Berlin, 2001.


\bibitem{HL} Han, Q. and Lin, F. H. \emph{Elliptic partial differential equations}, 2nd ed., Courant Lecture Notes in Mathematics, vol. 1, Courant Institute of Mathematical Sciences/AMS, New York, 2011. 



\bibitem{HS}
Hirsch, M. W. and Smith, H. L. 
Asymptotically stable equilibria for monotone semiflows.
\emph{Discrete Contin. Dyn. Syst.} \textbf{14} (2006), no.~3, 385--398.
See also the erratum available at
\url{https://math.la.asu.edu/~halsmith/erataAIMS_journals.pdf}







\bibitem{JY}
Jiang, J.-F. and Yu, S.-X.
Stable cycles for attractors of strongly monotone discrete-time dynamical
systems.
\emph{J. Math. Anal. Appl.} \textbf{202} (1996), 349--362.

\bibitem{Kh}Kielh\"ofer, H.
\emph{Bifurcation theory.}
An introduction with applications to partial differential equations. Second edition.
Appl. Math. Sci., 156
Springer, New York, 2012.

\bibitem{KS}
Kohn, R. V. and Sternberg, P.
Local minimisers and singular perturbations,
\emph{Proc. Roy. Soc. Edinburgh Sect. A} \textbf{111} (1989),
no.~1--2, 69--84.

\bibitem{Le15}
Le, N. Q.
On the second inner variations of Allen--Cahn type energies and applications
to local minimizers.
\emph{J. Math. Pures Appl.} (9) \textbf{103} (2015), no.~6, 1317--1345.


\bibitem{Matano}
Matano, H.
Asymptotic behavior and stability of solutions of semilinear diffusion equations.
\emph{Publ. Res. Inst. Math. Sci.} \textbf{15} (1979), no.~2, 401--454.


\bibitem{Modica}
Modica, L.
The gradient theory of phase transitions and the minimal interface criterion.
\emph{Arch. Rational Mech. Anal.} \textbf{98} (1987), no.~2, 123--142.


\bibitem{MM}
Modica, L. and Mortola, S.
Un esempio di $\Gamma^{-}$-convergenza.
\emph{Boll. Un. Mat. Ital. B} (5) \textbf{14} (1977), no. 1, 285--299.

\bibitem{MStZ} Montero, J. A.,  Sternberg, P. and Ziemer, W. P.
Local minimizers with vortices in the Ginzburg-Landau system in three dimensions.
\emph{Comm. Pure Appl. Math.} {\bf 57} (2004), no. 1, 99--125.

\bibitem{Simon} Simon, L. \emph{Lectures on geometric measure theory}. Proceedings of the Centre for Mathematical Analysis, Australian National 
University, 3. Australian National University, Centre for Mathematical Analysis, Canberra, 1983.

\bibitem{Smith}
Smith, H. L. 
\emph{Monotone dynamical systems.} An introduction to the theory of competitive and cooperative systems. Mathematical Surveys and Monographs, vol.~41,
American Mathematical Society, Providence, RI, 1995.

\bibitem{St}
Sternberg, P. 
The effect of a singular perturbation on nonconvex variational problems.
\emph{Arch. Rational Mech. Anal.} \textbf{101} (1988), no.~3, 209--260.

\bibitem{St2} Sternberg, P. Vector-valued local minimizers of nonconvex variational problems. Current directions in nonlinear partial differential equations (Provo, UT, 1987). \emph{Rocky Mountain J. Math.} \textbf{ 21} (1991), pp. 799--807. 



\bibitem{StZ} Sternberg, P. and Zeimer, W.
Local minimisers of a three-phase partition problem with triple junctions.
\emph{Proc. Roy. Soc. Edinburgh Sect. A} \textbf{124} (1994), no. 6, 1059--1073.


\end{thebibliography}
\end{document}